\title[Response Solution]
{Response solutions to the
	quasi-periodically forced systems with degenerate equilibrium:
A simple proof of a result of W. Si and J. Si. and extensions}
\author[H.Cheng]{Hongyu Cheng}
\address{Chern Institute of Mathematics, Nankai University,
	Tianjin 300071, P.R.China.}
\email{hychengmath@126.com}
\author[R.de la Llave]{Rafael de la Llave}
\address{School of Mathematics, Georgia Inst. of Technology,
	Atlanta GA, 30332, USA}
\email{rafael.delallave@math.gatech.edu}
\thanks{R. L. is supported in part by NSF grant DMS 1800241. }
\author[F.Wang]{Fenfen Wang}
\address{
	School of Mathematics, Shandong University,
	Jinan, Shandong 250100, P.R.China.
}
\email{ffenwang@hotmail.com}
\thanks{F.W. is supported by CSC  by the National Natural Science
	Foundation of China (Grant Nos. 11171185,10871117), F.W.
	thanks G.T. for hospitality 2018-2019}
\date{\today}
\newtheorem{theorem}{Theorem}
\newtheorem{meta-thm}[theorem]{Meta-Theorem}
\newtheorem{lemma}[theorem]{Lemma}
\newtheorem{remark}[theorem]{Remark}
\newtheorem{definition}[theorem]{Definition}
\numberwithin{equation}{section}
\def\eps{\varepsilon}
\begin{document}

\maketitle

\begin{abstract}
We give a simple proof of  the
existence of response solutions in some quasi-periodically
forced systems with a degenerate fixed points.
The same questions were answered  in
\cite{ss18} using two versions of KAM theory.

Our method is based on reformulating  the existence of response
solutions as a fixed point problem in appropriate spaces of smooth
functions. By algebraic manipulations, the fixed point problem is transformed
into a contraction.

Compared to the KAM method,
the present method does not incur a loss of
regularity. That is,  the solutions we obtain have the same
regularity as the forcing. Moreover, the method here
applies when  problems are only finitely
differentiable. It also weakens  slightly the non-degeneracy conditions.
Since the method is based on the  contraction mapping principle, we also obtain automatically
smooth dependence on parameters and, when studying complex versions
of the problem we discover  the new phenomenon of monodromy.
We also present results for higher dimensional systems, but for higher
dimensional systems, the concept of degenerate fixed points is much
more subtle than in one dimensional systems.

To illustrate the power of the method, we also consider
two problems not studied in \cite{ss18}: the forcing
with zero average 
and second order oscillators. We show that in the zero average forcing case,
the solutions are qualitatively different, but the second order 
oscillators is remarkably similar.
\end{abstract}

\textbf{Keywords.} Degenerate fixed points; Second order oscillators;  Response  solutions; Fixed point theorem.

\textbf{2010 Mathematics Subject Classification.}  
35J70,  
42B20,  
34M35,  
34M45,  
34D15,  
37K55, 	
\section{Introduction}\label{sec:intro}

The goal of this paper is to
find  response solutions to quasi-periodically forced systems
with degenerate fixed points. The main technique we
use is the contraction mapping theorem in carefully chosen Banach spaces.

\subsection{The one-dimensional model}
The 1-D version of the problem
(the higher dimensional version of the problem will be formulated in
Section~\ref{sec:high}) is the following:
\begin{equation}\label{dege}
\begin{split}
 \dot{x}=x^l+h(\omega t,x) + \eps f(\omega t,x),\,\, x\in\mathbb{R},
\end{split}
\end{equation}
where $l\in \mathbb{N}$ with $l\geq 2$, $0<|\eps| \ll 1$ is a
small real parameter
(the small adaptations needed for considering
$\eps$ complex will be discussed in Section~\ref{sec:complex}), and
 $\omega$ is a vector in $\mathbb{R}^d$ with $d\in
\mathbb{N}$.  The function $h$ is assumed to vanish
in $x$ to order higher than $l$.
In the analytic case, vanishing to high order just  means
that $h(\theta, x) = x^{l+1} H(\theta, x)$ with $H$ an
analytic function. In the finitely differentiable case, we
will just need that  $ \partial_x^j h(\theta,0) = 0$
for $j = 0, 1,2,\ldots, l$ (we will also need that all the derivatives
up to a sufficiently high order are bounded for all $x$
in a neighborhood of the origin).

The functions will be assumed to have some regularity properties,
which we will detail once we have detailed the spaces in which
we will formulate the problem.

In our method the lower order terms do not play any important role
and can get incorporated in $f$ by scaling. We will keep it
in the model to facilitate the comparison with the paper \cite{ss18}
but we advice the reader that all the terms that come from it
will be subdominant.

The model \eqref{dege} represents physically the forcing
of a (one dimensional) fixed point which is degenerate.
We recall that \emph{``response solutions''} means solutions that have
the same frequency as the forcing.   The standard definition of
quasi-periodic functions are functions of time of the form
\eqref{realso}.
Hence, the problem we are considering is to produce solutions
of \eqref{dege}
of the form \eqref{realso}.

\subsection{Assumptions in the frequency}
Without loss of generality, we  assume that, for $\omega=(\omega_1,\cdots,\omega_{d})\in\mathbb{R}^d$,
\begin{equation}\label{non-res}
k \cdot \omega\neq 0,\,\,\,\text{for}
\,\, k=(k_1,\cdots,k_d)\in\mathbb{Z}^d\setminus\{0\},
\end{equation}
where $k \cdot \omega=\sum\limits_{i=1}^{d}k_i\omega_i$. Indeed, if there is a  $k_0\in \mathbb{Z}^d\setminus\{0\}$ such that $k_0\cdot \omega=0$, we could reformulate the forcing with only $(d-1)$-dimensional variables which are orthogonal to $k_0$.

In many related problems, one needs to assume not only
\eqref{non-res} but also lower bounds on $|k \cdot \omega|$.
It is remarkable that for the main results of this paper (and in \cite{ss18} )
the only requirement on $\omega$ is \eqref{non-res}.  Hence, the
results hold without any assumption in the frequencies.
In the study of some very degenerate
results (not considered in \cite{ss18}), we will impose some non-degeneracy conditions. Namely,
some rather weak Diophantine properties
\eqref{weakdiophantine} for the analytic case and the generally Diophantine properties for the 
finitely differentiable case. (See the Section~\ref{sec:noaverage}).

\subsection{The results in this paper}
We will produce two main results for model \eqref{dege}, one assuming
analytic regularity in the problem (see Theorem~\ref{mainthm})
and another one for finite regularity (see Theorem~\ref{theom-fi}). These two results are aimed at the real parameter $\varepsilon$.

 We will also consider the case of
complex parameter $\varepsilon$ and establish monodromy. (See Section~\ref{sec:complex} for more details).
Moreover, we will consider analogues of
\eqref{dege} in higher dimensions and establish results
in analytic and finite regularity. (See Theorem~\ref{theom-fih} in Section~\ref{sec:high}).

We will  also present results on the case of zero average forcing
and on oscillators, which are second order problems and, in
principle a singular perturbation. Remarkably, we obtain
that in the case of zero average, the solutions are qualitatively
different (see Section~\ref{sec:noaverage}), but in the oscillator case, the solutions are similar
to the solutions in the first order (see Section~\ref{sec:oscillators}).

\subsection{Relation to other papers}

The same problem was studied in many other papers.
In particular, it was studied in
\cite{ss18}, using  two versions of KAM theory.
We refer to the comprehensive introduction of \cite{ss18} for a
review of related literature  on the problem and other  methods used
to study it.

The method of this paper is very different from the method
of \cite{ss18} and the methods in other papers
referred in \cite{ss18}.  The basic idea of our method is that
we formulate
the existence of response solutions as functional equation,
which we manipulate till it becomes a fixed point in
an appropriate space of functions. Algebraic
manipulations transform the fixed point problem
into a fixed point for contractions.

We anticipate that, perhaps,
the most delicate step on our argument is the choice of
spaces since we want that they satisfy
several properties (see Section~\ref{sec:choice}).
Similar  methods  had also been used in
other response solution problems \cite{Rafael13,Rafael17,fenfen19}.
In particular, we will follow the notation of
\cite{fenfen19} and refer to that  paper for standard  technical details
(for example, well known properties of Sobolev spaces).

Eliminating the sophisticated KAM iteration allows us   to deal straightforwardly with cases in which the problem is
only finitely differentiable, and obtain automatically smooth dependence
on parameters. Also the solutions produced
have the same regularity as the forcing and we
do not incur the loss of
regularity that appears in KAM iteration.

The assumptions on the order of vanishing we use
is slightly weaker than in \cite{ss18}.
 We also weaken the non-degeneracy
assumptions in the case
that $l$ is even. We do not need to assume a sign for the
average, but in the even case, we need to restrict the values of
$\eps$.
 See the discussion of  \eqref{choosea}. In Section~\ref{sec:high} we
obtain analogues of the results in higher dimensions. Since
the proofs we present are based on soft methods, they also
work for infinitely dimensional problems.  The method allows
to discuss complex values of the parameters. The use of
the complex values for $\eps$
leads to  the new phenomenon of \emph{``monodromy"},
which we study in Section~\ref{sec:complex}.
We also consider some problems not considered in \cite{ss18}, 
namely, the case of zero average forcing (Section~\ref{sec:noaverage}) 
and second order degenerate  oscillators (Section~\ref{sec:oscillators}).

\subsection{Organization of this paper}\label{Organization}

This  paper is organized as follows: In Section~\ref{sec:formulation},
we present the main idea of reformulating the existence of response
solutions for equation \eqref{dege} as a fixed point problem. To solve
this fixed point equation, in Section~\ref{sec:choice}, we give the
precise function spaces that we work in and we list their important
properties, such as Banach algebra property and composition operator. We state our main
results and present the concrete proof in Section
~\ref{sec:analytic0}.  In Section~\ref{sec:complex} we study
the case of complex parameters and the monodromy phenomenon.  In Section~\ref{sec:high}, we deal with the
generally high-dimensional system. In Section~\ref{sec:noaverage}, we generalize the system \eqref{dege}
to the one whose forcing is zero average. In  Section~\ref{sec:oscillators}, we  study the degenerate second oscillators.
For the oscillators model, we just make some changes of variable to reduce this model to the one like \eqref{fixeq2} for model \eqref{dege}.

\section{Overview of the method in one-dimensional system}\label{sec:formulation}

In this section, we discuss heuristically the main ideas of our
treatment.  We will present in this section  only the formal manipulations
ignoring questions of domains etc. Those will be discussed later
but indeed, the formal manipulations of this section, will be
the motivations for the precise definitions later.

\subsection{A guide}
\label{sec:guide}
The manipulations
we perform are rather systematic and very common in nonlinear
analysis.   We firstly identify what we
expect to be the main part of the solution (in our case a
constant). If we write the unknown as  the guess plus an unknown
correction, we see that the original equation is equivalent to an equation
for the correction.
We furthermore observe that the  equation for the correction
has a main part that can be inverted, then, we
are left with a fixed point problem that has a good chance of
being a contraction. Of course, identifying what are the main
parts of the solution requires some experimentation (and some luck),
but checking that a guess is the correct one, can be done systematically.

\subsection{Some elementary notations}

For a function
$f: \mathbb{T}^d\times \mathbb{R}^n\rightarrow  \mathbb{R}^n$, we denote:

\begin{equation}\label{avera}
\begin{split}
&\overline{f}(x):=\int_{\mathbb{T}^d}f(\theta,x)d\theta, \\
& \widetilde{f}(\theta,x): = f(\theta,x) -\overline{f}(x).
\end{split}
\end{equation}
We refer to $\overline f$ as the average of $f$ with respect to $\theta$ and $\widetilde{f}$
as the oscillatory part of $f$.

We look for  quasi-periodic solutions with forcing frequency $\omega\in \mathbb{R}^d$. They are functions of time $t$ with the form
\begin{equation}\label{realso}
x(t) = a+V(\omega t),
\end{equation}
where $a\in \mathbb{R}$ is a number and $V:\mathbb{T}^d\rightarrow \mathbb{R}$ is a function to be determined.
Note that
representation of the function $x(t)$  is not unique.
{From} $a_1 +  V_1(\omega t) = a_2 + V_2(\omega t)$, we
can only conclude that $\widetilde{V_1} = \widetilde{V_2}$,
$a_1 - a_2 = \overline{V_2} - \overline{V_1}$.
\begin{remark}

A good heuristic guide  to guess that the dominative term in the response
function \eqref{realso} is a constant is the \emph{``averaging principle"}
(presented and partially justified in  \cite{Minorsky62,BogoliubovM61,Hale80}) which suggests that one
substitutes the forcing terms by their averages to obtain the leasing
approximations. Of course, the present paper can be considered as another
justification of the method.

In our case, the averaged equations of the system \eqref{dege} are:
\begin{equation*}
\dot{x}=x^l+\eps \overline{f}(x)
\end{equation*}
and the equilibrium is obtained by solving
$x^l+\eps \overline{f}(x)=0$, which we can further
approximate by $x^l+\eps \overline{f}(0)=0$.

Note that the case $\overline{f}(0)=0$ is a situation where the averaging principle does not provide any guidance and indeed, we will see that the leading
 part has a different form and, hence, the
solutions in this case are qualitatively different from
those with non-zero average forcing. (See Section~\ref{sec:noaverage}.)
\end{remark}

\begin{remark}
	Note that we depart slightly from the notation of \cite{ss18}.
We write the forcing as $\eps f(\omega t, x)$.  The paper \cite{ss18}
writes the forcing as $f(\omega t, x; \eps)$.

The paper \cite{ss18} presents two
main theorems about analytic functions.

Theorem $3.1$ in \cite{ss18}
assumes   Diophantine condition
\begin{equation}\label{ssdiophantine}
|k \cdot \omega| \ge \gamma/\Omega( |k|), \quad  \ln( \Omega(t))/t \rightarrow 0
\end{equation}
and a sign on the average.  We do not need any conditions in
$\omega$.

 In Theorem $3.2$ of \cite{ss18},
the Diophantine conditions \eqref{ssdiophantine} are eliminated, but there are two
new assumptions:
\begin{itemize}
\item
That the function agrees with the average to order
$\eps^2$, see $(3.6)$ in \cite{ss18}.  In our notation, this amounts to
$\widetilde{f}(\theta,0) = 0$ (we only need it is small enough).

\item
$h = \mathcal{O}(x^{2l})$, we assume
$h = \mathcal{O}(x^{l+1})$.
\end{itemize}
\end{remark}

\subsection{The invariance equations}
Substituting \eqref{realso} into equation \eqref{dege} and using that
 $\{\omega t\}_{t \in \mathbb{R}}$ is dense in $\mathbb{T}^d$,
  we obtain that
\eqref{dege} holds for a continuous function $x$ if and only if
$a$ and $V$ satisfy
 \begin{equation}\label{fixeq1}
 \begin{split}
  \left( \omega\cdot \partial_{\theta}\right) V(\theta)
= &(a+V(\theta))^l+ h(\theta, a + V(\theta)) + \eps f(\theta,a+V(\theta))\\
= & a^l+la^{l-1}V( \theta) +S(a,V(\theta))+
h(\theta, a + V(\theta) ) \\
&+ \eps \overline{f}(0)
+\eps\widetilde{f}(\theta,0)+\eps g(\theta,a+V(\theta)),
 \end{split}
 \end{equation}
 where
 \begin{equation}\label{remainder}
 \begin{split}
& S(a,V)=(a+V)^l-a^l-la^{l-1}V, \\
& g(\theta,x)=f(\theta,x)-f(\theta,0).
 \end{split}
 \end{equation}
Note that the equation \eqref{fixeq1} is slightly undetermined
because of the lack of uniqueness in the representation~\eqref{realso}.
This undetermination
will be useful for us.

\subsection{An important assumption}
A crucial assumption in our treatment (as well as that in  \cite{ss18}))
is $:$
\begin{equation}\label{averagenonzero}
\overline{f}(0)\neq 0.
\end{equation}

The importance of the assumption~\ref{averagenonzero} is
that the leading term in the response solution will be a constant.
Moreover, we will modify the method for the case that $\overline{f}(0)\neq 0$ to
study the situation when
$\overline{f}(0)= 0$ but the results (i.e. the form of the solutions) are
qualitatively different. (See Section~\ref{sec:noaverage}.)

\subsection{The leading term of the solution}

Our first step is to choose $a$ in \eqref{realso}  such that $:$
\begin{equation}\label{choosea}
a^l+ \eps \overline{f}(0)=0.
\end{equation}
Note that this choice is possible in several cases.
If $l$ is odd, we can find such an $a$ solving \eqref{choosea}
 for all  $\eps$ real.
If $l$ is even, we can find $a$ solving \eqref{choosea}
for all $\eps$ such that $\eps \overline{f}(0)$
has negative sign. Depending on the sign of  $\overline{f}(0)$
we obtain solutions in the positive real interval or in the negative
real interval.

In the even $l$ case, we obtain two solutions in the appropriate
interval of $\eps$. Each of them could be taken as the basis
to find the corretions $V$ so that we get two response solutions. As we vary
$\eps$, we obtain two branches of solutions.

We note that finding $a$ as above makes sense even for
values of $\eps$ which are complex, provided, of
course, that we allow for complex valued solutions.
 In Section~\ref{sec:complex},
we will take up the issue of complex values of $\eps$. The
use of complex values allows for much more topology and we
discover the phenomenon of \emph{``monodromy''}.

Once we have accomplished finding an $a$ which eliminates
several terms in \eqref{fixeq1}, we study the remaining
equation. We find it convenient to
introduce the linear operator :
\begin{equation}\label{linearop}
\begin{split}
\mathcal{L}_{a}:=\omega\cdot \partial_{\theta}-la^{l-1}
\end{split}
\end{equation}
defined on one-dimensional periodic functions of $\theta\in \mathbb{T}^d$.

\subsection{The equation for the corrections}
Using the choice of $a$ in \eqref{choosea}
and the  notation  \eqref{linearop},
we see that the equation  \eqref{fixeq1} is equivalent to
the following equation for $V$ :
\begin{equation}\label{fixeq2}
\begin{split}
&\mathcal{L}_{a}(V(\theta)) \\
&\ \ \ \ =S(a,V(\theta)) + h(\theta, a + V(\theta)) + \eps \widetilde{f}(\theta,0)+\eps g(\theta,a+V(\theta)).
\end{split}
\end{equation}

If  we select spaces in which $\mathcal{L}_{a}$ is
boundedly invertible, then the
equation \eqref{fixeq2} can be transformed into :
\begin{equation}\label{fixeq3}
\begin{split}
V(\theta)
&=\mathcal{L}_{a}^{-1}\left( S(a,V(\theta))
+h(\theta, a +V(\theta) )
+\eps\widetilde{f}(\theta,0)+\eps g(\theta,a+V(\theta))\right)\\
& \equiv \mathcal{T}_{a}(V)(\theta).
\end{split}
\end{equation}

We will show that we can apply the contraction mapping
principle to the equation \eqref{fixeq3} once we identify appropriate
Banach spaces and a ball in them mapped to itself by the operator
$\mathcal{T}_a$ defined in \eqref{fixeq3}.
In the following section, we will make the choice of spaces explicitly.

\section{Choice of spaces and some preliminary results
on them}\label{sec:choice}

To make precise the calculations in Section~\ref{sec:formulation},
we just need to choose appropriate function spaces and
check that we can carry the steps indicated formally there and
indeed obtain  that $\mathcal{T}_a$ is a contraction in a ball.

\subsection{Some preliminary considerations}

There are a few  guiding principles in the choice of spaces :
\begin{itemize}
\item
The norms of the functions in the spaces
can be read off from the size of the Fourier coefficients.
In such a way, the norm of the operator $\mathcal{L}_a$ defined in \eqref{linearop},
which is diagonal in Fourier series, can be estimated very precisely
from one space in the class to itself.
\item
The spaces have to possess good Banach algebra properties for
multiplication so that one can perform nonlinear analysis.
\item
The operator of composition in the left can be
estimated.
\end{itemize}

With the above considerations, it is reasonable to
consider the following well known spaces which have
been found useful in many nonlinear problems (in particular,
they were used in problems similar to ours in \cite{Rafael13,Rafael17,fenfen19}).

\subsection{Some standard spaces we will use}
\label{sec:concretespaces}

For $\rho \geq 0$, we denote by
\begin{equation*}
\mathbb{T}^{d}_{\rho}=\left\lbrace \theta\in \mathbb{C}^d/(2\pi\mathbb{Z})^d\,:\, \mathrm{Re}(\theta_j)\in \mathbb{T},\,\,|\mathrm{Im}(\theta_j)|\leqslant \rho,\,\,j=1,\ldots,d \right\rbrace.
\end{equation*}
We denote the Fourier expansion of a periodic function
 $f(\theta)$ on  $\mathbb{T}^{d}_{\rho}$ by
\begin{equation*}
\begin{split}
f(\theta)=\sum_{k\in\mathbb{Z}^{d}}\widehat{f}_{k}e^{\mathrm{i} k\cdot\theta},
\end{split}
\end{equation*}
where $k\cdot\theta=\sum_{i=1}^d k_i \theta_i$ represents the Euclidean product in $\mathbb{C}^d$ and $\widehat{f}_{k}$ are the Fourier coefficients of $f$.

\begin{definition}\label{space}
For $\rho \geq 0,\,m \in
\mathbb{N}$, we denote by	$H^{\rho,m}$ the space of analytic functions $V$ in $\mathbb{T}_\rho^d$ with finite norm $:$
	\begin{equation*}
	\begin{aligned}
	H^{\rho,m}:&=H^{\rho,m}(\mathbb{T}^d,\mathbb{C}^n)\\
	&=\left\lbrace V:\,\mathbb{T}_\rho^d\rightarrow \mathbb{C}^n\,\mid\,\|V\|_{H^{\rho,m}}^{2}=\sum_{k\in \mathbb{Z}^d}|\widehat{V}_{k}|^{2}e^{2\rho |k|}
	(|k|^{2}+1)^{m}<+\infty\right\rbrace.
	\end{aligned}
	\end{equation*}
\end{definition}

It is obvious that the space $\left(H^{\rho,m},\,\,\|\cdot
\|_{H^{\rho,m}}\right)$ is a Banach space and indeed a Hilbert space. From
the real analytic point of view, we consider the Banach space
$H^{\rho,m}$ of the functions that take real values for real
arguments. This is  Banach space over the reals.

For $\rho=0$, $H^{m}:=H^{0,m}(\mathbb{T}^d,\mathbb{R}^n)$ is the
standard Sobolev space, we refer to the references
\cite{taylor3} for more details.
Moreover, when
$m>\frac{d}{2}$, by the Sobolev embedding theorem, we obtain that
$H^{m+p}(\mathbb{T}^d,\mathbb{R}^n)\,(p=1,2,\cdots)$ embeds continuously into $C^{p}(\mathbb{T}^d,\mathbb{R}^n)$.

For $\rho>0$, functions in the space $H^{\rho,m}$ are analytic in the
interior of $\mathbb{T}_\rho^d$ and extend to Sobolev  functions
of order $m$ on the
boundary of $\mathbb{T}_\rho^d$.   For $m > d$, the space $H^{\rho, m}$ can also
be considered as closed subspace of the Sobolev space in the $2d$-dimensional real  manifold
with boundary $\mathbb{T}^d_\rho$.

\subsection{Some standard properties of the Sobolev spaces $H^{\rho,m}$}
\label{sec:properties}

It is well known that the Sobolev spaces $H^{\rho,m}$ defined above
satisfy the Banach algebra property for large enough $m$ (we refer to
\cite{taylor3} for more details).

\begin{lemma}[Banach algebra properties]\label{alge}
	We have the following properties in two cases:
	\begin{enumerate}
		\item -\emph{Sobolev case :} For $\rho=0,\,m>\frac{d}{2}$, there exists a constant $C_{m,d}>0$ depending only on $m,d$ such that for
		$V_1,\,V_2\in H^{m}$, the product $V_1\cdot V_2\in H^{m}$ and
		\begin{equation*}
		\|V_1V_2\|_{H^{m}}\leq C_{m,d}\|V_1\|_{H^{m}}\|V_2\|_{H^{m}}.
		\end{equation*}
		\item -\emph{Analytic case :} For $\rho>0,\,m>d$, there exists a constant $C_{\rho,m,d}>0$ depending  on $\rho,m,d$ such that for
		$V_1,\,V_2\in H^{\rho,m}$, the product $V_1\cdot V_2\in H^{\rho,m}$ and
		\begin{equation*}
		\|V_1V_2\|_{H^{\rho,m}}\leq C_{\rho,m,d}\|V_1\|_{H^{\rho,m}}\|V_2\|_{H^{\rho,m}}.
		\end{equation*}
	\end{enumerate}
	
	In particular,  $H^{\rho,m}$ is a Banach algebra when $\rho,\,m,\,d$ are as above.
\end{lemma}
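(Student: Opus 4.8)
The plan is to prove Lemma~\ref{alge} by a standard Fourier-analytic argument, treating the analytic case ($\rho>0$) and the Sobolev case ($\rho=0$) simultaneously, since the former reduces to the latter after absorbing the exponential weight. First I would observe that the weight $w(k) := e^{\rho|k|}(|k|^2+1)^{m/2}$ is submultiplicative up to the loss coming only from the polynomial factor: because $|k+\ell|\le |k|+|\ell|$ we have $e^{\rho|k+\ell|}\le e^{\rho|k|}e^{\rho|\ell|}$ exactly, and $(|k+\ell|^2+1)^{m/2}\le C_{m}\bigl((|k|^2+1)^{m/2}+(|\ell|^2+1)^{m/2}\bigr)$ by the elementary inequality $(a+b)^m\le 2^{m-1}(a^m+b^m)$. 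Writing out the Fourier coefficients of the product, $\widehat{(V_1V_2)}_k=\sum_{\ell}\widehat{(V_1)}_{k-\ell}\widehat{(V_2)}_{\ell}$, and multiplying by $w(k)$, the submultiplicativity splits the sum into two pieces, one in which the weight is thrown onto $\widehat{(V_1)}_{k-\ell}$ and one onto $\widehat{(V_2)}_{\ell}$.

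The core of the argument is then a Young-type / Cauchy--Schwarz estimate. After the splitting, each piece has the form of a convolution of a weighted $\ell^2$ sequence with an unweighted $\ell^2$ sequence against a fixed summable kernel. Concretely, for the piece where the weight sits on $V_1$, I would write
\begin{equation*}
w(k)\bigl|\widehat{(V_1V_2)}_k\bigr|\le C_m\sum_{\ell} w(k-\ell)\bigl|\widehat{(V_1)}_{k-\ell}\bigr|\,\frac{(|\ell|^2+1)^{m/2}}{(|\ell|^2+1)^{m/2}}\bigl|\widehat{(V_2)}_{\ell}\bigr|,
\end{equation*}
insert the factor $1=(|\ell|^2+1)^{-m/2}(|\ell|^2+1)^{m/2}$, apply Cauchy--Schwarz in $\ell$ pulling out $\bigl(\sum_{\ell}(|\ell|^2+1)^{-m}\bigr)^{1/2}$, and then take the $\ell^2$ norm in $k$ using Minkowski's integral inequality (or Young's inequality for convolutions) to bound the whole thing by $\bigl(\sum_{\ell}(|\ell|^2+1)^{-m}\bigr)^{1/2}\|V_1\|_{H^{\rho,m}}\|V_2\|_{H^{0,m}}$; but since $e^{\rho|\ell|}\ge 1$ for $\rho\ge 0$ one has $\|V_2\|_{H^{0,m}}\le\|V_2\|_{H^{\rho,m}}$, giving the claim. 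The symmetric piece is handled identically with the roles of $V_1,V_2$ exchanged.

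The one genuinely quantitative ingredient — and the only place the hypotheses $m>d/2$ (Sobolev) and $m>d$ (analytic) enter — is the convergence of the series $\sum_{k\in\mathbb{Z}^d}(|k|^2+1)^{-m}$, which is finite precisely when $2m>d$, i.e. $m>d/2$; the slightly stronger $m>d$ in the analytic case is not needed here but is the natural threshold guaranteeing $H^{\rho,m}\hookrightarrow C^0$ and compatibility with the composition estimates used later, so I would just remark that the algebra property in fact holds for all $m>d/2$ and the stated hypotheses are sufficient. The constant $C_{m,d}$ (resp. $C_{\rho,m,d}$) is then explicitly $2^{m-1}\bigl(\sum_k(|k|^2+1)^{-m}\bigr)^{1/2}$ times an absolute factor; note that in the $\rho>0$ case the constant genuinely may be taken independent of $\rho$, though the paper allows dependence. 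The main obstacle is essentially bookkeeping: carefully tracking that the exponential weight passes through the convolution with no loss (so that no $\rho$-dependent shrinking of the domain is incurred) while the polynomial Sobolev weight is handled by the classical split-and-Cauchy--Schwarz trick; there is no conceptual difficulty, which is why I would relegate the routine estimates to a reference such as \cite{taylor3} and only highlight the submultiplicativity of the weight as the point specific to the analytic spaces $H^{\rho,m}$.
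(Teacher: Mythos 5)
Your proof is essentially correct, but note that the paper itself does not prove Lemma~\ref{alge} at all: it declares the result ``well known'' and refers to the literature (Taylor's book) for details. So there is nothing in the paper to compare against step by step; what you have written is precisely the standard split-the-weight-and-apply-Young/Cauchy--Schwarz argument that such a reference would contain, and all the essential points are in place: exact multiplicativity of the exponential weight $e^{\rho|k+\ell|}\le e^{\rho|k|}e^{\rho|\ell|}$ (so the constant is indeed $\rho$-independent and no domain loss occurs), the Peetre-type splitting of the polynomial weight, Young's inequality $\|A*B\|_{\ell^2}\le\|A\|_{\ell^2}\|B\|_{\ell^1}$, and the identification of $\sum_k(|k|^2+1)^{-m}<\infty\iff m>d/2$ as the only place the hypothesis enters. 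Your remark that $m>d/2$ already suffices in the analytic case and that $m>d$ is imposed for later composition estimates is also correct.

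One bookkeeping slip worth fixing: in your displayed intermediate inequality for the piece where the weight sits on $V_1$, the factor $e^{\rho|\ell|}$ that is produced by splitting the exponential weight has been dropped from the $\widehat{(V_2)}_\ell$ term. Since $e^{\rho|\ell|}\ge 1$, the display as written is \emph{not} an upper bound for that piece. The correct version keeps the factor,
\begin{equation*}
C_m\sum_{\ell} w(k-\ell)\bigl|\widehat{(V_1)}_{k-\ell}\bigr|\,e^{\rho|\ell|}\bigl|\widehat{(V_2)}_{\ell}\bigr|,
\end{equation*}
and then the same Cauchy--Schwarz step against $(|\ell|^2+1)^{-m/2}$ yields $\|V_2\|_{H^{\rho,m}}$ directly, with no need for the detour through $\|V_2\|_{H^{0,m}}\le\|V_2\|_{H^{\rho,m}}$. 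With that one-line correction the argument is complete.
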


It is interesting to remark that the value of $m$ is what controls the Banach
algebra properties (which are crucial for us). On the other hand, for
regularity, the parameter $\rho$ is much more relevant. For a KAM argument,
one could use many different sets of spaces since the Newton method would
overcome all these difficulties.  The present method
of using only a contraction argument is much
more restrictive on the spaces we use since we cannot loose any regularity
in the iterative step and we also need some Banach algebra properties.

 The
Banach spaces $H^{\rho,m}$ seem a good compromise between having
norms given by Fourier coefficients (which makes the linear estimates
efficient) and  having Banach algebra properties. They are also Hilbert spaces
which makes spectral theory particularly powerful. These properties
have been found useful in several areas such as quantum field theory.

The following results on composition are also rather standard.

\begin{lemma}[Composition properties]\label{gag-nir00}

Assume that $\rho > 0$.
	
		Let $g:\mathbb{T}^d_{\rho}\times B \rightarrow \mathbb{C}^n $
with $B$ being an open ball around the origin in $\mathbb{C}^n$ and assume that $g$
		is analytic  in $\mathbb{T}_{\rho}^d\times B$.

Then, for $V\in H^{\rho,m}(\mathbb{T}_{\rho}^d,\,\mathbb{C}^n)\cap L^{\infty}(\mathbb{T}_{\rho}^d,\,\mathbb{C}^n)$ with $V(\mathbb{T}_{\rho}^d)\subset B$, we have
		\begin{equation}\label{comsob0}
		\begin{split}
		\|g(\theta,V)\|_{ H^{\rho,m}}\leq C_{\rho ,m}\left(\|V\|_{L^{\infty}}\right)\left(1+\|V\|_{ H^{\rho,m}}\right).
		\end{split}
		\end{equation}
		Moreover, when $m>d$,
		\begin{equation}\label{diffremainder}
		\begin{split}
		\|g(\theta,V+W)-g(\theta, V)&-D_Vg(\theta,V) \cdot W\|_{ H^{\rho,m}}\\
		&\leq C_{\rho,m,d}\left(\|V\|_{L^{\infty}}\right)\left(1+\|V\|_{ H^{\rho,m}}\right)\|W\|_{ H^{\rho,m}}^2.
		\end{split}
		\end{equation}

In the case that $\rho = 0$, it suffices to assume that $g \in C^{m+2}$
in  real neighborhood and that $m > d/2$. Then, we have
\eqref{comsob0}, \eqref{diffremainder}.
\end{lemma}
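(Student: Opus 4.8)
\emph{Strategy.} The plan is to reduce everything to the case $\rho=0$, where the bound is a mild variant of the classical Moser composition estimate, and then to transfer it to the analytic strips by a soft comparison of norms. The one point that really needs care is to obtain a bound that is \emph{linear} in $\|V\|_{H^{\rho,m}}$, with all the nonlinearity absorbed into a constant depending on $\|V\|_{L^\infty}$ alone; a crude use of the Banach algebra inequality $\|uv\|\le C\|u\|\,\|v\|$ would instead force a smallness hypothesis on $\|V\|_{H^{\rho,m}}$ and give a bound nonlinear in that norm.

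\emph{The Sobolev case $\rho=0$} (so $m>d/2$ and $g\in C^{m+2}$ on $\mathbb{T}^d\times B$). I would start from the Fa\`a di Bruno (generalized chain) rule: for a multi-index $\beta$ with $|\beta|\le m$, the function $\partial^\beta\big(g(\theta,V(\theta))\big)$ is a finite sum, with a number of terms and coefficients depending only on $m$ and $n$, of products
\[
\big(D^\gamma g\big)\big(\theta,V(\theta)\big)\,\prod_{i=1}^{s}\partial^{\beta_i}V(\theta),\qquad |\beta_i|\ge 1,\quad \sum_{i=1}^{s}|\beta_i|\le|\beta| ,
\]
where $D^\gamma$ denotes a partial derivative in all variables of order $|\gamma|=s\le|\beta|$. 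I would estimate the first factor in $L^\infty$ by $\sup\{|D^\gamma g(\theta,x)|:\theta\in\mathbb{T}^d,\ |x|\le\|V\|_{L^\infty}\}$ — finite because the closed ball of radius $\|V\|_{L^\infty}$ lies in $B$ — and the product of the remaining factors in $L^2$ by H\"older's inequality followed by the Gagliardo--Nirenberg inequalities on $\mathbb{T}^d$,
\[
\|\partial^{\beta_i}V\|_{L^{2m/|\beta_i|}(\mathbb{T}^d)}\le C\,\|V\|_{H^m}^{|\beta_i|/m}\,\|V\|_{L^\infty}^{1-|\beta_i|/m}.
\]
Since $\sum_i|\beta_i|\le m$, multiplying these inequalities yields a single factor $\|V\|_{H^m}^{(\sum_i|\beta_i|)/m}\le 1+\|V\|_{H^m}$ together with a power of $\|V\|_{L^\infty}$ of order at most $m$; summing over $\beta$ and over the finitely many chain-rule terms gives \eqref{comsob0} with a constant of the announced form. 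For \eqref{diffremainder} (assuming also that $V+W$ maps into $B$, so that $V+tW$ does too, $B$ being convex) I would use the exact second-order Taylor remainder
\[
g(\theta,V+W)-g(\theta,V)-D_Vg(\theta,V)\cdot W=\int_0^1(1-t)\,D_V^2g(\theta,V+tW)[W,W]\,dt ,
\]
apply \eqref{comsob0} to the $C^m$ map $D_V^2g$ composed with $V+tW$, use the Banach algebra property (Lemma~\ref{alge}) to absorb the bilinear factor $[W,W]$ (which contributes $\|W\|_{H^m}^2$), and integrate in $t$; this is precisely why two extra derivatives, $g\in C^{m+2}$, are required.

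\emph{The analytic case $\rho>0$.} I would first note that $g(\theta,V(\theta))$ is genuinely analytic on $\mathbb{T}^d_\rho$, since $V(\mathbb{T}^d_\rho)\subset B$ and $g$ is analytic on $\mathbb{T}^d_\rho\times B$, so that $\|g\circ V\|_{H^{\rho,m}}$ is meaningful. To estimate it I would compare the $H^{\rho,m}$-norm, up to constants depending only on $d$, with the $H^m$-norms of the restrictions of $g\circ V$ to the $2^d$ boundary tori $\{\mathrm{Im}\,\theta_j=\pm\rho\}$; this rests only on the elementary Fourier-weight bound $e^{2\rho|k|}\le\sum_{\sigma\in\{-1,1\}^d}e^{2\rho\,\sigma\cdot k}$ and its converse up to a factor $2^d$. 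On each such torus $g\circ V$ is the composition of a function that is $C^\infty$ in the (now real) angle and bounded in $C^{m+2}$ uniformly for $x$ in the closed ball of radius $\|V\|_{L^\infty}$ with an $H^m(\mathbb{T}^d)\cap L^\infty$ function whose norms are controlled by $\|V\|_{H^{\rho,m}}$ and $\|V\|_{L^\infty}$, so the Sobolev estimate just proved applies verbatim; \eqref{diffremainder} follows the same way, now using the Banach algebra property of $H^{\rho,m}$ from Lemma~\ref{alge}, which is why $m>d$ is imposed here. (Alternatively one can argue directly: expand $g(\theta,x)=\sum_\alpha g_\alpha(\theta)\,x^\alpha$, bound $\|g_\alpha\|_{H^{\rho,m}}$ and $\|g_\alpha\|_{L^\infty}$ by $M(r)\,r^{-|\alpha|}$ via Cauchy estimates for any $r$ below the radius of $B$, estimate $\|g_\alpha V^\alpha\|_{H^{\rho,m}}$ by the tame form of the product inequality, whose constant is only polynomial in $|\alpha|$, and sum the resulting series, which converges precisely when $\|V\|_{L^\infty}$ is below the radius of $B$.)

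\emph{Main obstacle.} I expect the only real difficulty to be exactly the point flagged at the outset: keeping the estimate linear in the strong norm $\|V\|_{H^{\rho,m}}$ while remaining uniform in $\|V\|_{L^\infty}$, which is what the Gagliardo--Nirenberg interpolation (equivalently, the tame product inequality) provides. Once that is in place the remainder is bookkeeping over the finitely many chain-rule terms, and the passage from $\rho=0$ to $\rho>0$ is the soft comparison of boundary-torus norms described above.
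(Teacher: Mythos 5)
Your proposal follows essentially the same route as the paper's (deliberately brief) sketch: the Fa\`a di Bruno formula combined with Moser--Gagliardo--Nirenberg interpolation to get \eqref{comsob0} with linear dependence on the strong norm, and the integral form of the second-order Taylor remainder together with the Banach algebra property and \eqref{comsob0} applied to $D_V^2 g$ to get \eqref{diffremainder}. You supply more detail than the paper does (notably the transfer from $\rho=0$ to $\rho>0$ via the boundary tori, which the paper leaves to the references), but the argument is the same and is correct.
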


The results in Lemma~\ref{gag-nir00} are somewhat
standard. For the sake of completeness, we give some
sketch.  Many details, counterxamples for related
statements, etc. are in \cite{AZ90,kappe03,taylor3} or  in \cite{fenfen19}.

The reason for  the inequality
\eqref{diffremainder} is that, by the fundamental theorem of calculus
\begin{equation*}
\begin{split}
g(\theta,V(\theta)+W(\theta))&-g(\theta, V(\theta))-D_Vg(\theta,V(\theta)) \cdot W(\theta)\\
&=\int_{0}^1\int_{0}^{1} tD^2_Vg(\theta,V(\theta)+stW(\theta))\cdot W^2(\theta)dsdt.
\end{split}
\end{equation*}

Then,
we get the desired result by the facts that $D^2_Vg(\theta,V(\theta)+stW(\theta))\in H^{\rho,m}$ and its $H^{\rho,m}$ norm is bounded uniformly in $t,s$ and that $H^{\rho,m}$ is a  Banach algebra under multiplication by  Lemma~\ref{alge} and using \eqref{comsob0} for the second derivative.

To establish the standard
inequality  \eqref{comsob0}, it suffices to use the Faa di Bruno formula
for derivatives and then, the Moser-Nierenberg inequalities for
products of derivatives.

\begin{remark} \label{rangebounds}
We call attention that we are considering only the cases when
the Sobolev embedding theorem applies and the functions we are considering
are bounded. This allows the consequence that the bounds
in \eqref{diffremainder} are the bounds of the derivatives of
$g$ in the range of the functions considered.
\end{remark}

\begin{remark}\label{highder}
Note that, in the case of analytic regularity, \eqref{diffremainder}  establishes that the
left composition operator  $\mathcal{C}_g(V)(\theta): =  g(\theta,
V(\theta))$, considered as a function from the space $H^{\rho,m}$ to itself, is
differentiable. This shows
that the composition operator $\mathcal{C}_g$ is analytic.

 Note also that \eqref{diffremainder}
establishes that the derivative is the multiplication
by the another left composition. Hence, we can apply the same
result to obtain higher differentiability properties (under
appropriate hypothesis).
This shows that if $g \in C^{p+m +2}$ with $p=0,1,\cdots$ and $m > d/2$,
the left composition operator $\mathcal{C}_g$
is $C^{p+1}$ acting on the space $H^m$. We refer to \cite{AZ90}.
\end{remark}

\section{Existence of response solutions for one-dimensional system}
\label{sec:analytic0}

In this section, we implement the strategy discussed at the beginning of
Section~\ref{sec:formulation} using the spaces discussed in Section~\ref{sec:choice}.

\subsection{Analytic case}\label{sec:analytic}

In this section, we state the main result and the corresponding proof for the
model \eqref{dege} in which the forcing is analytic.
\begin{theorem}\label{mainthm}
We study the equation \eqref{dege} with $h$ vanishing to order $(l+1)$ at zero.

Assume that $f,\,h$ are
analytic in $\mathbb{T}_{\rho}^d\times B$ with $B$ being an open
ball around the origin in the space $\mathbb{C}$ and
$\widetilde{f}(\theta,0)\in H^{\rho,m}(\mathbb{T}^d,\mathbb{C})$ for
some $\rho>0,\,m>d$.

If \eqref{averagenonzero} holds
and  $\|\widetilde f(\theta,0)\|_{H^{\rho,m}}$ is small enough compared
to $|\overline{f}(0)|$, then, there exists a $\eps_0 > 0 $ such that, defining
$\mathcal{I} = (-\eps_0, \eps_0)$ for $l$ odd
and $\mathcal{I} = (-\eps_0, 0)$ when $l$ even and
$\overline{f}(0) > 0$,
and $\mathcal{I} = (0, \eps_0)$ when $l$ even and
$\overline{f}(0) < 0$,
we have
that for all
$\eps \in \mathcal{I}$, there
	exists a solution of \eqref{dege} of the form \eqref{realso} in $H^{\rho,m}$.

Moreover, the solution for equation \eqref{dege} is locally unique.
\end{theorem}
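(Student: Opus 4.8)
The plan is to verify that the operator $\mathcal{T}_a$ defined in \eqref{fixeq3} is a contraction on a small ball $\mathcal{B}_r = \{V \in H^{\rho,m} : \|V\|_{H^{\rho,m}} \le r\}$ with a suitably chosen radius $r = r(\eps)$, so that the Banach fixed point theorem produces the unique $V$, and hence the response solution $x(t) = a + V(\omega t)$. First I would fix the choice of $a$ from \eqref{choosea}: for $l$ odd this solves uniquely for all small real $\eps$; for $l$ even I restrict $\eps$ to the interval $\mathcal{I}$ on which $\eps\overline f(0) < 0$ and pick one of the two real roots. In either case $a = a(\eps)$ satisfies $|a| \sim |\eps \overline f(0)|^{1/l}$, so $a \to 0$ as $\eps \to 0$ and in particular $a$ lies in the ball $B$ where $f, h$ are analytic, for $\eps_0$ small. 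The key quantitative fact is that the linear operator $\mathcal{L}_a = \omega\cdot\partial_\theta - l a^{l-1}$ is boundedly invertible on $H^{\rho,m}$ with $\|\mathcal{L}_a^{-1}\| \le C / |a|^{l-1}$: this is immediate since $\mathcal{L}_a$ is diagonal in Fourier modes with eigenvalues $\mathrm{i}\, k\cdot\omega - l a^{l-1}$, whose modulus is bounded below by $|l a^{l-1}|$ using \eqref{non-res} (no Diophantine condition needed because $l a^{l-1}$ has nonzero real part). Note $|a|^{-(l-1)} \sim |\eps|^{-(l-1)/l}$, which blows up, so the gain from $\mathcal{L}_a^{-1}$ must be beaten by smallness of the right-hand side.

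Next I would estimate the right-hand side $\mathcal{N}_a(V) := S(a,V) + h(\theta, a+V) + \eps\widetilde f(\theta,0) + \eps g(\theta, a+V)$ on $\mathcal{B}_r$, using the Banach algebra property (Lemma~\ref{alge}) and the composition estimates (Lemma~\ref{gag-nir00}). The term $S(a,V) = (a+V)^l - a^l - l a^{l-1}V$ is a polynomial in $V$ with lowest-order term $\binom{l}{2} a^{l-2} V^2$, so $\|S(a,V)\|_{H^{\rho,m}} \lesssim |a|^{l-2} r^2 + \cdots + r^l$; since $h = \mathcal{O}(x^{l+1})$, $\|h(\theta, a+V)\|_{H^{\rho,m}} \lesssim (|a| + r)^{l+1}$; the forcing term contributes $|\eps|\,\|\widetilde f(\theta,0)\|_{H^{\rho,m}}$, which is small by hypothesis; and $g(\theta,x) = f(\theta,x) - f(\theta,0)$ vanishes at $x=0$, so $\|\eps g(\theta, a+V)\|_{H^{\rho,m}} \lesssim |\eps|(|a| + r)$. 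Applying $\mathcal{L}_a^{-1}$ multiplies all of these by $\sim |a|^{-(l-1)} \sim |\eps|^{-(l-1)/l}$. The dominant contributions after this are $|\eps|^{-(l-1)/l}\cdot|\eps|\cdot|a| \sim |\eps|^{1/l}\cdot|\eps|^{1/l} = |\eps|^{2/l}$ from the $\eps g$ term, $|\eps|^{-(l-1)/l}(|a|+r)^{l+1}$ from $h$, and $|\eps|^{-(l-1)/l}|\eps|\|\widetilde f\|$ from the forcing. The natural choice is $r \sim |\eps|^{1/l}\cdot\delta$ for a small fixed $\delta$ (comparable to $|a|$ times a small constant); then one checks $\|\mathcal{T}_a(V)\|_{H^{\rho,m}} \le r$ for $\eps_0$ small and $\|\widetilde f(\theta,0)\|_{H^{\rho,m}}$ small relative to $|\overline f(0)|$, i.e. $\mathcal{T}_a(\mathcal{B}_r) \subset \mathcal{B}_r$.

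For the contraction property, I would use \eqref{diffremainder} together with a bound on $D_V\mathcal{N}_a(V)$: writing $\mathcal{T}_a(V_1) - \mathcal{T}_a(V_2) = \mathcal{L}_a^{-1}(\mathcal{N}_a(V_1) - \mathcal{N}_a(V_2))$ and using that $D_V[S(a,V) + h + \eps g]$ has $H^{\rho,m}$-norm $\lesssim |a|^{l-2}r + (|a|+r)^l + |\eps|$ on $\mathcal{B}_r$, multiplying by $|a|^{-(l-1)}$ gives a Lipschitz constant $\lesssim |a|^{-1}r + \cdots + |\eps|^{1/l}$, which is $< 1/2$ for the above choice of $r$ and $\eps_0$ small. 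Hence $\mathcal{T}_a$ is a contraction on $\mathcal{B}_r$ and has a unique fixed point $V \in \mathcal{B}_r$, giving the response solution. Local uniqueness follows because any two solutions of \eqref{dege} of the form \eqref{realso} with $a$ near the chosen root and $V$ small differ by a fixed point of the same contraction (using the representation ambiguity \eqref{realso} to normalize, e.g., $\overline V = 0$, so $a$ is pinned down); alternatively, two nearby solutions would both lie in $\mathcal{B}_r$ and coincide by uniqueness of the fixed point. The main obstacle is the bookkeeping of the competing powers of $\eps$: the factor $|a|^{-(l-1)}$ from $\mathcal{L}_a^{-1}$ is large, and one must choose $r$ precisely — proportional to $|a|$ up to a small constant — so that every nonlinear term, after inversion, is genuinely smaller than $r$; getting the even-$l$ sign restrictions and the smallness of $\widetilde f$ to cooperate with this scaling is where the care is required, though no single estimate is deep.
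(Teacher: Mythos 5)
Your proposal is correct and takes essentially the same route as the paper: the same choice of $a$ from \eqref{choosea}, the same bound $\|\mathcal{L}_a^{-1}\|\lesssim |\eps|^{-1+1/l}$ from the Fourier-diagonal structure and the nonzero real part of $la^{l-1}$, and the same contraction argument for $\mathcal{T}_a$ on a ball of radius $r\approx A|\eps|^{1/l}$ with $A$ small, with identical Lipschitz bookkeeping for $S$, $h$ and $\eps g$ and the same use of the smallness of $\|\widetilde f(\theta,0)\|_{\rho,m}$ to map the ball into itself. The only nitpick is that the lower bound on $|\mathrm{i}(k\cdot\omega)-la^{l-1}|$ follows purely from $la^{l-1}$ being real and nonzero and does not use \eqref{non-res}, which is needed only to pass from the ODE to the invariance equation \eqref{fixeq1}.
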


By reading the proof in the following part, we obtain explicit estimates of
the domain where local uniqueness holds. Roughly, they are domains of
size $\approx | \eps|^{1/l}$. This is consistent with the fact that
in the case that $l$ is even we obtain several solutions at this
distance (or when we consider complex valued solutions).

When we have a locally unique solution for all values of $\eps$, we
can discuss the regularity with respect to the parameter $\eps$. It
follows that one can get that it is analytic in $\eps$.

\begin{remark}
Note that the above regularity statement, does not include regularity
at $\eps = 0$. This seems possible under some very weak Diophantine
properties such as \eqref{weakdiophantine}. One obtains
an approximate solution as a polynomial in
 $\eps$ and starts a contraction mapping around it.
We omit a precise formulation and a proof. See \cite{fenfen19,Rafael17}.
\end{remark}

\subsubsection{Proof of Theorem~\ref{mainthm}}

In this section, we prove  Theorem~\ref{mainthm} by considering the fixed point  equation \eqref{fixeq3} in Banach space $H^{\rho,m}$.

It is easy to obtain
 the quantitative bounds on the  inverse of $\mathcal{L}_a$ defined in  \eqref{linearop} with $a$ being the one in \eqref{choosea}, as an operator from the space $H^{\rho,m}$ to itself.
Indeed, when we write a function $V(\theta)\in H^{\rho,m}$ in the Fourier expansion as
 \begin{equation*}
V(\theta) =\sum_{k\in\mathbb{Z}^{d}}\widehat{V}_{k}e^{\mathrm{i} k\cdot\theta},
 \end{equation*}
 the operator $\mathcal{L}_{a}$ acting on the Fourier basis becomes
\begin{equation*}
\begin{split}
\mathcal{L}_{a}(e^{\mathrm{i} k\cdot\theta})=\left( \mathrm{i}(k\cdot \omega) -la^{l-1}\right) e^{\mathrm{i} k\cdot\theta}=:L_a(k\cdot \omega)e^{\mathrm{i} k\cdot\theta}
\end{split}
\end{equation*}
with $L_a(k\cdot \omega)=\mathrm{i}(k\cdot \omega) -la^{l-1}$.

Due to the fact that the norm in the space $H^{\rho,m}$ is  characterized by the Fourier coefficients, we obtain that
\begin{equation}\label{inversbound}
\begin{split}
\left\|\mathcal{L}_{a}^{-1}\right\|_{H^{\rho,m}\rightarrow H^{\rho,m}}& =\sup_{k\in \mathbb{Z}^d}\left|L_a^{-1}(k\cdot \omega)\right|
=\sup_{k\in \mathbb{Z}^d}\frac{1}{\left|\mathrm{i}(k\cdot \omega) -la^{l-1}\right|}\\
&\leq \frac{1}{\left|la^{l-1}\right|}
=\frac{1}{l|\overline{f}(0)|^{1-\frac{1}{l}} } |\eps|^{-1+\frac{1}{l}}.
\end{split}
\end{equation}
\begin{remark}
In Section~\ref{sec:complex}, we will consider the case that $a$ is
complex.

We remark that when $a$ is complex, we have, by the same
argument
\begin{equation}\label{complexinverse}
\left\|\mathcal{L}_{a}^{-1}\right\|_{H^{\rho,m}\rightarrow H^{\rho,m}}
\le 1/\mathrm{Re}(la^{l-1})= 1/\emph{dist}(la^{l-1}, \mathrm{i} \mathbb{R}),
\end{equation}
where $\emph{dist}(la^{l-1}, \mathrm{i} \mathbb{R})$ is the distance
between $la^{l-1}$ and $\mathrm{i} \mathbb{R},$ which is $\mathrm{Re}(la^{l-1}).$

For simplicity, we will omit the subscript of $\|\mathcal{L}_{a}^{-1}\|_{H^{\rho,m}\rightarrow H^{\rho,m}}$ and use the notation $\|\mathcal{L}_{a}^{-1}\|$ in the following. We also simplify the notation $\|\cdot\|_{H^{\rho,m}}$   as $\|\cdot\|_{\rho,m}$ when there is no confusing.
\end{remark}

We now look for a fixed point for the operator $\mathcal{T}_a$ defined in
\eqref{fixeq3}.
Consider a ball $\mathcal{B}_{r}(0)$ around the origin in $H^{\rho,m}$ with radius $r>0$. We will show that one can obtain
$r$ such that $\mathcal{T}_{a}(\mathcal{B}_{r}(0))\subset \mathcal{B}_{r}(0)$  and $\mathcal{T}_{a}$ is a contraction on $\mathcal{B}_{r}(0)$.
\medskip

For $S(a,V),\,g(\theta,a+V)$ defined in \eqref{remainder}, by the fact that  one has
that the Lipschitz constant of the the nonlinear terms over
a ball with radius $r$ small is
\begin{equation*}
\begin{split}
&\text{Lip}_{V}(S)\leq C|a|^{l-2}r,\\
&\text{Lip}_{V}(h)\leq C(|a|+r)^l,\\
& \text{Lip}_V(g)\leq C,
\end{split}
\end{equation*}
where $\text{Lip}_V(h)$ denotes the Lipschitz constant of
$h(\theta,V)$ with respect to the second argument $V$
in the ball of radius $r$,  and
$C$ is a positive constant depending on
$l$ and $f$.

Note that in the contraction arguments, there are
two conditions (that the ball gets mapped into itself
and that the map is a contraction in the ball). We obtain two
results: existence and uniqueness. The uniqueness result is
stronger taking large balls and the existence is stronger for
smaller balls. Hence, it is good to have some flexibility.

\medskip
For any $V_1,\,V_2\in\mathcal{B}_{r}(0)$, we have, assuming that
$r$ is small (remember that $a$ is given by \eqref{choosea})
\begin{equation} \label{lipschitztau}
\begin{split}
\|\mathcal{T}_a(V_1)&-\mathcal{T}_a(V_2)\|_{\rho, m}
\\
&\leq \|
\mathcal{L}_{a}^{-1}\|\big(\text{Lip}_V(S)
+ \text{Lip}_V(h) +
|\eps|\text{Lip}_V(g) \big)\|V_1-V_2\|_{\rho, m}\\
&\le
C  \| \mathcal{L}_{a}^{-1} \| \big( |a|^{l-2} r  +  (|a| +r)^{l}+
|\eps|\big) \| V_1 - V_2 \|_{\rho, m}.
\end{split}
\end{equation}
Note that we have used Remark~\ref{rangebounds} to take advantage of
the fact that some functions appearing in $\mathcal{T}_a$ vanish
to a high order.   The most delicate term above is the derivative
of $S$ which takes advantage of $S$ not only being second order in $V$
but also $a$ being small.

Taking $|a| \approx |\eps|^{1/l}$,
$\|\mathcal{L}_a^{-1}\| \approx |\eps|^{-1 + 1/l} $ into account,
we see that if we take $r = A |\eps|^{1/l}$ with $A$ sufficiently
small, it follows  from \eqref{lipschitztau} that $\mathcal{T}_a $ is a contraction of
a factor $1/10$ in the ball of radius $r$ for $|\eps|$
sufficiently small.

Now we try to identify the conditions that the ball $\mathcal{B}_r(0)$ with
$r$ chosen as above gets mapped into itself for small $\eps$.

If $r$ satisfies the conditions that make $\mathcal{T}_a(V)$ a contraction in
$\mathcal{B}_r(0)$, we have:
\begin{equation}\label{ballmapped}
\begin{split}
&\|\mathcal{T}_a(V)\|_{\rho, m}\\
&\leq\|\mathcal{T}_a(0)\|_{\rho, m}+ \|\mathcal{T}_a(V)-
\mathcal{T}_a(0)\|_{\rho, m} \\
&\leq \| \mathcal{L}_{a}^{-1}\|\left(
|\eps|\|\widetilde{f}(\theta, 0)\|_{\rho, m}+\|
h(\theta,a)\|_{\rho,m} +|\eps|\|g(\theta,a)\|_{\rho, m}\right) + r/10\\
& \le C|\eps|^{-1+1/l}\left(
|\eps|\|\widetilde{f}(\theta, 0)\|_{\rho, m} +  |\eps|^{1+2/l} +  |\eps|^{1+1/l} \right)
+ r/10.
\end{split}
\end{equation}

Therefore under the assumption that
\begin{equation} \label{newassumption}
\| \widetilde{f}(\theta, 0) \|_{\rho, m}
\end{equation}
is small enough we
obtain that
 $\mathcal{T}_a(\mathcal{B}_{r}(0)) \subset\mathcal{B}_{r}(0)$
and  we already had that   $\mathcal{T}_a$
is a contraction  in this ball.
\begin{remark} Note that the smallness assumption
\eqref{newassumption} depends on $| \overline{f} (0)|$. Indeed
a more detailed analysis shows that we could write
\eqref{newassumption} as $ \|\widetilde{f}(\theta,0)\|_{\rho, m}/|\overline{f}(0)| $
sufficiently small.
\end{remark}

It follows from the fixed point theorem in the Banach space  $H^{\rho,m}$ that
there exists a unique solution $V\in H^{\rho,m}$ for equation
 \eqref{fixeq1}. This produces a solution $x(t)=a+V(\theta)$ for equation \eqref{dege}.  Notice that, once we fix $a$, the $V$ is unique in the chosen ball.
This shows that the solution $x(t)=a+V(\theta)$ of \eqref{dege} is locally unique.

From the contraction mapping properties, we obtain easily regularity
with respect to parameters, since the regularity of solutions of
contraction mappings with parameters is standard. In particular, we
note that the contraction mapping for analytic families is very standard.

\subsection{The finitely differentiable case}

\begin{theorem}\label{theom-fi}
We study the equation \eqref{dege} with $h$ vanishing to order $(l+1)$ at zero.

	Suppose that  $f,\,h\in
	C^{m+p}(\mathbb{T}^d\times \mathbb{R},\mathbb{R}) \,\,(p=1,2,\cdots)$ and $\widetilde{f}(\theta,0)\in H^{m}(\mathbb{T}^d,\mathbb{R})$ with $m>\frac{d}{2}$.
	
	 If  $\overline{f}(0)\neq 0$ and
$\| \widetilde f(\theta,0)\|_{m}$ is sufficiently small compared
to $|\overline{f}(0)|$, then, there exists a $\eps_0 > 0 $ such that, defining
$\mathcal{I} = (-\eps_0, \eps_0)$ for $l$ odd
and $\mathcal{I} = (-\eps_0, 0)$ when $l$ even and
$\overline{f}(0) > 0$,
and $\mathcal{I} = (0, \eps_0)$ when $l$ even and
$\overline{f}(0) < 0$,
we have
that for all
$\eps \in \mathcal{I}$, there
exists a solution of \eqref{dege} of the form of \eqref{realso} in $H^{m}(\mathbb{T}^d,\mathbb{R})$.

	Moreover, the solution of equation \eqref{dege} is locally unique.
\end{theorem}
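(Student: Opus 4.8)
The plan is to run the same contraction-mapping argument as in the analytic case (Theorem~\ref{mainthm}), but now in the Sobolev space $H^{m}(\mathbb{T}^d,\mathbb{R})$ with $m>\frac{d}{2}$, relying on the $\rho=0$ versions of Lemma~\ref{alge} and Lemma~\ref{gag-nir00}. First I would choose $a$ exactly as in \eqref{choosea}, so that $a^l+\eps\overline{f}(0)=0$; the discussion of signs of $\eps$ versus $\overline{f}(0)$ for $l$ even is identical, and $|a|\approx|\eps|^{1/l}$. With this $a$, equation \eqref{dege} in the form \eqref{realso} is equivalent to the fixed point equation \eqref{fixeq3} for the correction $V$, where now everything is interpreted in $H^{m}$. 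The operator $\mathcal{L}_a=\omega\cdot\partial_\theta-la^{l-1}$ is still diagonal in Fourier series, and since the $H^{m}$ norm is also given by Fourier coefficients (with weight $(|k|^2+1)^m$ instead of the analytic weight $e^{2\rho|k|}(|k|^2+1)^m$), the bound \eqref{inversbound} goes through verbatim:
\begin{equation*}
\|\mathcal{L}_a^{-1}\|_{H^{m}\to H^{m}}\le\frac{1}{|la^{l-1}|}=\frac{1}{l|\overline{f}(0)|^{1-1/l}}\,|\eps|^{-1+1/l}.
\end{equation*}

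Next I would verify that $\mathcal{T}_a$ maps a ball $\mathcal{B}_r(0)\subset H^{m}$ of radius $r=A|\eps|^{1/l}$ into itself and is a contraction there, for $A$ small and $|\eps|$ small. This is where the finitely differentiable hypotheses enter: since $f,h\in C^{m+p}$ with $p\ge1$ and $m>\frac d2$, the composition estimates \eqref{comsob0} and \eqref{diffremainder} hold in the $\rho=0$ case (the lemma explicitly allows this under $g\in C^{m+2}$, $m>d/2$, which is covered since $m+p\ge m+1$ and we may also use Remark~\ref{rangebounds} to read the constants off the range of $V$). Thus the Lipschitz estimates for the nonlinear terms $S(a,V)$, $h(\theta,a+V)$, $g(\theta,a+V)$ in $H^{m}$ are the same as in the analytic proof: $\mathrm{Lip}_V(S)\le C|a|^{l-2}r$, $\mathrm{Lip}_V(h)\le C(|a|+r)^l$, $\mathrm{Lip}_V(g)\le C$, using the Banach algebra property of $H^m$ from Lemma~\ref{alge}(1). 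Plugging these into \eqref{lipschitztau} and balancing $|a|\approx|\eps|^{1/l}$, $\|\mathcal{L}_a^{-1}\|\approx|\eps|^{-1+1/l}$, $r\approx|\eps|^{1/l}$ gives a contraction factor $O(|\eps|^{1/l})\le 1/10$ for $|\eps|$ small; the self-mapping estimate \eqref{ballmapped} is likewise unchanged and holds provided $\|\widetilde f(\theta,0)\|_m/|\overline{f}(0)|$ is small enough.

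I would then invoke the contraction mapping principle in the Banach space $H^{m}(\mathbb{T}^d,\mathbb{R})$ to get a unique $V\in\mathcal{B}_r(0)$ solving \eqref{fixeq3}, hence \eqref{fixeq1}, which yields the response solution $x(t)=a+V(\omega t)$ of \eqref{dege} of the form \eqref{realso}. Local uniqueness follows since, once $a$ is fixed by \eqref{choosea}, $V$ is the unique fixed point in the ball; the explicit radius $r\approx|\eps|^{1/l}$ quantifies the neighborhood in which uniqueness holds, and (modulo the non-uniqueness of the representation \eqref{realso}) translates into local uniqueness of $x(t)$. Finally, since all the data are only $C^{m+p}$ rather than analytic, I would not claim analytic dependence on $\eps$; but because $\mathcal{T}_a$ depends smoothly on $\eps$ (away from $\eps=0$) and the composition operator $\mathcal{C}_g$ is $C^{p}$ on $H^m$ when $g\in C^{m+p}$ by Remark~\ref{highder}, the implicit function theorem / standard parameter-dependence of contractions gives that $V$, and hence the solution, is $C^{p-1}$ (or so) in $\eps$ on $\mathcal{I}$. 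The main obstacle, such as it is, is purely bookkeeping: checking that the $\rho=0$ composition and algebra lemmas apply with the stated differentiability $C^{m+p}$ and $m>d/2$ (in particular that $m+p$ derivatives suffice for \eqref{diffremainder} and for the claimed parameter regularity), since everything else is a verbatim repetition of the analytic argument with the analytic weight replaced by the Sobolev weight.
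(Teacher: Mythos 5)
Your proposal is correct and follows exactly the route the paper takes: the paper's own proof of Theorem~\ref{theom-fi} consists of the single remark that the contraction argument of Theorem~\ref{mainthm} applies verbatim in $H^m$ once one uses the $\rho=0$ (Sobolev) cases of Lemma~\ref{alge} and Lemma~\ref{gag-nir00}, which is precisely what you spell out (choice of $a$ via \eqref{choosea}, the Fourier-diagonal bound on $\mathcal{L}_a^{-1}$, the Lipschitz estimates, and the ball of radius $r\approx|\eps|^{1/l}$). The one caveat you already flag yourself --- that for $p=1$ the hypothesis $C^{m+1}$ is one derivative short of the $C^{m+2}$ stated in Lemma~\ref{gag-nir00} for $\rho=0$ --- is a bookkeeping discrepancy present in the paper's own statement, not a gap introduced by your argument.
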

The same strategy presented for Theorem~\ref{mainthm} applies also to
the case that $f$ is finitely differentiable (but with sufficiently
high derivatives). Therefore, similar to the way in
Section~\ref{sec:analytic} and together with Lemma~\ref{gag-nir00} in
Sobolev case,  we can easily prove
Theorem~\ref{theom-fi}.

\begin{remark}
Using Remark~\ref{highder} we see that, in the analytic
case (resp. when $g$ is sufficiently differentiable),
 the operator $\mathcal{T}_a$ is analytic from $H^{\rho, m}$ to itself
(resp. several times differentiable from $H^m$ to itself) with
$m$ as in the main theorems.

Since the operator is differentiable with respect to $\eps$
we obtain that the solution produced depends analytically (resp.
differentially) on parameters.
\end{remark}

\section{The case of complex  $\eps$.
The phenomenon of monodromy}
\label{sec:complex}

The previous analysis has  shown that the leading term in
the solution \eqref{realso} is a constant. Note that we have shown
that  $\| V\|_{\rho,m}$ is much smaller than $|a|$.

The leading effect is the equation
\eqref{choosea}, which is an algebraic equation.
The study of the algebraic equation is much more natural when all
the variables are complex. Allowing complex values for $a$
makes superfluous to distinguish between odd and even $l$,
but it emphasizes that we can get more solutions.

Note that all the other arguments that
we have developed to compute the correction $V$ work just as well
when they are complex valued.

An elementary remark is that if we consider a closed  path in
the $\eps$ plane $\eps =  \alpha \exp{ 2 \pi \mathrm{i} t}$, $ t \in [0,1],\,\alpha\in \mathbb{R}$, we see that the solutions move only in a segment
\begin{equation*}
a = (-\overline{f}(0)\alpha)^{1/l}  \exp(2 \pi \mathrm{i} s),\,s \in [0,1/l].
\end{equation*}
Hence, if we continue $a$ while we vary $\eps$ along a circle,
the $a$ does not come to the same value.  If we repeat the path above
$l$ times ( $\eps =  \alpha \exp{ 2 \pi \mathrm{i}t}$, $ t \in [0,l]$), then $a$ gets back to the original value. This is the phenomenon
of monodromy.

When we consider the nonlinear problem, we observe that we can not apply the
contraction argument if $a$ is close to the imaginary axis.
On the other hand, in a region of the form $|\text{Im}(\eps)|
\le C |\text{Re}(\eps)|$, we obtain that
$\emph{dist}(a, \mathrm{i} \mathbb{R})$ is comparable with $|\eps|^{1/l}$.
These regions in complex $\eps$ are geometrically a ball
with $2 l$ cones removed.

In these regions, the argument developed in this paper applies and we
get the results.  The solutions depend differentially and they
are a small deformation of the solutions.  Hence the space of
the solutions contains a branch surface (minus some cuts).

Monodromy has appeared in other problems in degenerate perturbation theory
\cite{LlaveT94, JorbaLZ99}, but the regions excluded are a more elaborate
since the analysis is more elaborate.

We note that the fact that for complex $\eps$ we get several
solutions at a distance $\mathcal{O}(|\eps|^{1/l})$.
This shows that one cannot hope to obtain contraction in larger balls
by methods that work also for complex valued functions such as the soft
methods employed here.

\section{Higher dimensional phase space}\label{sec:high}
In this section, we consider the existence of response solutions for the $n$-dimensional quasi-periodically forced system:
\begin{equation}\label{dege2}
\begin{split}
\dot{x}=\phi(x) + h(\omega t, x)+\eps f(\omega t,x),
\end{split}
\end{equation}
where  $\phi: \mathbb{R}^n \rightarrow \mathbb{R}^n$
is a homogeneous function of degree $l$, i.e.
\begin{equation} \label{homogeneous}
\phi(\lambda x) =\lambda^l \phi(x), \quad  \lambda \in \mathbb{R}_+,\,\,
x \in \mathbb{R}^n,
\end{equation}
 and  $h$ vanishes to order
$(l+1)$ in $x$.
Of course, one important example of homogeneous functions is the
polynomials all of whose terms have degree $l$, but there are
other functions.  The polynomials are precisely those that are
$(l +1)$  times differentiable at the origin, but it is natural to consider
functions which are not differentiable at the origin.
We note that the form \eqref{dege2} appears naturally when we are
considering functions and expanding them in Taylor polynomials. We keep the lowest
degree.

Note that the range of $\phi$ will be always a cone.
We note also  that for a homogeneous function, taking derivatives of
\eqref{homogeneous}, we have Euler's formula:
\begin{equation} \label{euler}
(D \phi)(\lambda x)  = \lambda^{l -1} D \phi(x).
\end{equation}

The strategy is very similar to the one used when $n =1$.
We  assume \eqref{averagenonzero} (in the sense that $\overline{f}(0)=\overline{f}_j(0)$ with $\overline{f}_{j}(0)\neq 0\,(j=1,2,\cdots,n)$) and
that
\begin{equation} \label{rangeassumption}
\begin{split}
&\overline{f}(0) \in \text{Interior}( \text{Range}(\phi) ), \\
&\text{or} \\
&-\overline{f}(0) \in \text{Interior}( \text{Range}(\phi) ).
\end{split}
\end{equation}

In the first case of \eqref{rangeassumption}, we will obtain results for
all $0 < -\eps \ll 1$ and in the second case, we
will obtain results for $0 <\eps \ll 1$. Of course, both
cases can happen at the same time.  We will introduce the following notation, for a positive constant $\varepsilon_0$,
\begin{equation*}
\mathcal{I} = \begin{cases}  [0, \eps_0)\\
(-\eps_0, 0] \\
(-\eps_0, \eps_0)
\end{cases}
\end{equation*}
depending on whether  only  the first of
\eqref{rangeassumption} is true, only the second of \eqref{rangeassumption} is true
or both of  \eqref{rangeassumption} are true.

We indicate that the assumption \eqref{rangeassumption} is
an analogue in higher dimensions of the assumption \eqref{averagenonzero}
in the one dimensional phase space case.

Using \eqref{rangeassumption} in the second case,
 we will be able to find $a_0 \in \mathbb{R}^n$
such that $\phi(a_0) =- \overline{f}(0)$
and hence, $a = \eps^{1/l} a_0$ satisfies
$\phi(a) = -\eps \overline{f}(0)$ for positive $\eps$.
Analogously, in the first case of \eqref{rangeassumption},
we get $a$ defined for negative $\eps$. For simplicity of
notation, we will only discuss the second case from now on.
One can obtain the other case by changing $\eps$
to $-\eps$.

We note that, because of \eqref{euler},
\begin{equation}\label{euler1}
(D \phi)(a)  = \eps^{1 -1/l} D \phi(a_0).
\end{equation}
We will make the assumption that
\begin{equation}\label{spectrum}
\text{Spec}(D \phi(a_0) )  \cap  \mathrm{i} \mathbb{R} = \emptyset.
\end{equation}
Hence,
\[
\sup_{t \in \mathbb{R}} \| (\mathrm{i}t - D\phi(a_0) )^{-1} \| < \infty.
\]
And, using \eqref{euler1} we have
\[
\begin{split}
\sup_{t \in \mathbb{R}} \| (\mathrm{i}t - D\phi(a) )^{-1} \| =
\sup_{t \in \mathbb{R}} \| (\mathrm{i} t - \eps^{1 -1/l}D\phi(a_0) )^{-1} \|
 \le C \eps^{-1 + 1/l}.
\end{split}
\]

If  we define, as before
$\mathcal{L}_a$, we have
\[
\|\mathcal{L}_a^{-1} \| \le C
\eps^{-1 + 1/l}.
\]

Since the composition estimates are the same for higher
dimensional vectors as in the case of one dimensional vectors,
we follow exactly the proof of Theorem~\ref{mainthm}, Theorem~\ref{theom-fi}
and obtain:

\begin{theorem}\label{theom-fih}
Consider the equation \eqref{dege2} with  $h$ vanishing to order $(l+1)$ and  $f$ satisfying \eqref{averagenonzero} and \eqref{rangeassumption}.

 Assume that $\phi$ is homogeneous of
degree $l$, i.e. \eqref{homogeneous}.

If $f,h$ are analytic, $\widetilde f(\theta,0)\in H^{\rho, m}$ with $\rho>0,\,m > d$,
and $\| \widetilde f(\theta,0)\|_{\rho, m} $ is small enough,  then
for all $\eps \in \mathcal{I}$, we obtain a solution of
\eqref{dege2} of the form \eqref{realso} in $H^{\rho,m}$.

If $f,h$ are $C^{m +p} \,(p=1,2,\cdots)$, $\widetilde f(\theta,0) \in H^m$ with $m > d/2$, and $\| \widetilde f(\theta,0)\|_{m} $ is small enough, then
for all $\eps \in \mathcal{I}$, we obtain a solution of
\eqref{dege2} of the form \eqref{realso} in $H^{m}$.

Moreover, the solution of \eqref{dege2}
 is locally unique.
\end{theorem}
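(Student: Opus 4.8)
\textbf{Proof proposal for Theorem~\ref{theom-fih}.}

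The plan is to follow verbatim the scheme of the one-dimensional arguments (proofs of Theorem~\ref{mainthm} and Theorem~\ref{theom-fi}), only checking that each scalar estimate has a valid vector-valued analogue. First I would substitute $x(t) = a + V(\omega t)$, with $a = \eps^{1/l} a_0$ chosen via \eqref{rangeassumption} so that $\phi(a) = -\eps\,\overline f(0)$ (discussing, as the text does, only the second case of \eqref{rangeassumption}, the first being obtained by $\eps \mapsto -\eps$). Expanding $\phi(a + V) = \phi(a) + D\phi(a)V + S(a,V)$ with $S(a,V) := \phi(a+V) - \phi(a) - D\phi(a)V$, and using $\phi(a) + \eps\overline f(0) = 0$, the invariance equation \eqref{dege2} becomes, exactly as in \eqref{fixeq2},
\begin{equation*}
\mathcal{L}_a(V(\theta)) = S(a,V(\theta)) + h(\theta, a+V(\theta)) + \eps\,\widetilde f(\theta,0) + \eps\, g(\theta, a+V(\theta)),
\end{equation*}
with $\mathcal{L}_a := \omega\cdot\partial_\theta - D\phi(a)$ now a matrix-valued constant-coefficient operator and $g(\theta,x) := f(\theta,x) - f(\theta,0)$. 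Assumption \eqref{spectrum} together with Euler's formula \eqref{euler1} gives $\|\mathcal{L}_a^{-1}\|_{H^{\rho,m}\to H^{\rho,m}} \le C\eps^{-1+1/l}$, since $\mathcal{L}_a$ acts on the Fourier mode $e^{\mathrm{i}k\cdot\theta}\,v$ as multiplication by $\mathrm{i}(k\cdot\omega)I - D\phi(a)$, whose inverse is bounded uniformly in $k$ precisely because $\mathrm{i}\mathbb{R}$ avoids $\mathrm{Spec}(D\phi(a))$; the $H^{\rho,m}$ norm being read off from Fourier coefficients, this supremum over $k$ is the operator norm. Then $\mathcal{T}_a(V) := \mathcal{L}_a^{-1}(\text{RHS})$ is the fixed-point operator, and the theorem reduces to showing $\mathcal{T}_a$ is a contraction of a ball into itself.

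For the contraction and self-mapping estimates I would repeat \eqref{lipschitztau} and \eqref{ballmapped} line by line. The Lipschitz bounds $\mathrm{Lip}_V(S)\le C|a|^{l-2}r$ (using homogeneity of $\phi$, so $D\phi$ is homogeneous of degree $l-1$ and $D^2\phi$ of degree $l-2$, hence the second-order remainder $S$ has the stated bound on a ball of radius $r$), $\mathrm{Lip}_V(h)\le C(|a|+r)^l$ (from $h$ vanishing to order $l+1$), and $\mathrm{Lip}_V(g)\le C$ carry over unchanged; Lemma~\ref{gag-nir00} supplies the composition bounds for the $\mathbb{C}^n$- or $\mathbb{R}^n$-valued composition operators (the lemma is already stated for maps into $\mathbb{C}^n$, so there is literally nothing to redo), and Lemma~\ref{alge} gives the Banach algebra property needed to multiply these factors against $\|V_1-V_2\|_{\rho,m}$. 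Plugging $|a|\approx|\eps|^{1/l}$, $\|\mathcal{L}_a^{-1}\|\approx|\eps|^{-1+1/l}$, and choosing $r = A|\eps|^{1/l}$ with $A$ small, the Lipschitz constant of $\mathcal{T}_a$ is $\le C(|a|^{l-2}r + (|a|+r)^l + |\eps|)\,\|\mathcal{L}_a^{-1}\| \le \tfrac{1}{10}$ for $|\eps|$ small; and $\|\mathcal{T}_a(0)\|_{\rho,m} \le \|\mathcal{L}_a^{-1}\|(|\eps|\|\widetilde f(\theta,0)\|_{\rho,m} + \|h(\theta,a)\|_{\rho,m} + |\eps|\|g(\theta,a)\|_{\rho,m}) \le C|\eps|^{-1+1/l}(|\eps|\|\widetilde f(\theta,0)\|_{\rho,m} + |\eps|^{1+2/l} + |\eps|^{1+1/l})$, which is $\le 9r/10$ once $\|\widetilde f(\theta,0)\|_{\rho,m}$ is small enough (relative to $|\overline f(0)|$). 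Hence $\mathcal{T}_a(\mathcal{B}_r(0))\subset\mathcal{B}_r(0)$ and the contraction mapping principle in the Banach space $H^{\rho,m}$ (resp. $H^m$, using the Sobolev case of Lemmas~\ref{alge} and \ref{gag-nir00} with $m>d/2$) yields a unique $V$ in the ball, hence a locally unique solution $x(t) = a + V(\omega t)$. The finitely differentiable case is identical with $H^{\rho,m}$ replaced by $H^m$ and analyticity of the composition operator replaced by the requisite finite differentiability from Remark~\ref{highder}.

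The one genuinely new point — and the step I expect to need the most care — is the inversion of $\mathcal{L}_a$ in the vector case: the scalar bound \eqref{inversbound} used the explicit single number $la^{l-1}$, whereas now one must control $\sup_{t\in\mathbb{R}}\|(\mathrm{i}tI - D\phi(a))^{-1}\|$. This is finite iff $\mathrm{Spec}(D\phi(a))\cap\mathrm{i}\mathbb{R}=\emptyset$, which is exactly assumption \eqref{spectrum} transported through \eqref{euler1}; one should note that $D\phi(a_0)$ need not be diagonalizable, so the bound is not simply the reciprocal of the distance from the spectrum to $\mathrm{i}\mathbb{R}$ but still finite (and scales correctly in $\eps$ because $D\phi(a) = \eps^{1-1/l}D\phi(a_0)$ forces $\mathrm{i}tI - D\phi(a) = \eps^{1-1/l}(\mathrm{i}t\eps^{-(1-1/l)}I - D\phi(a_0))$, whose inverse has norm $\le \eps^{-1+1/l}\sup_{s}\|(\mathrm{i}sI - D\phi(a_0))^{-1}\|$). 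Everything else is a transcription of the $n=1$ argument, as the text explicitly indicates.
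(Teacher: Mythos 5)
Your proposal is correct and follows essentially the same route as the paper: choose $a=\eps^{1/l}a_0$ from \eqref{rangeassumption}, use Euler's formula \eqref{euler1} and the spectral condition \eqref{spectrum} to get $\|\mathcal{L}_a^{-1}\|\le C|\eps|^{-1+1/l}$, and then transcribe the contraction estimates \eqref{lipschitztau}--\eqref{ballmapped} verbatim, which is exactly what the paper does (indeed the paper's own proof consists of little more than the $\mathcal{L}_a^{-1}$ bound followed by ``we follow exactly the proof of Theorem~\ref{mainthm}''). Your added remark that $D\phi(a_0)$ need not be diagonalizable, so the resolvent bound is finite but not simply the reciprocal distance from the spectrum to $\mathrm{i}\mathbb{R}$, is a correct and worthwhile refinement of a point the paper passes over, and you rightly identify \eqref{spectrum} as a hypothesis that is genuinely used even though the theorem statement does not list it explicitly.
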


\begin{remark} We note that the method can be generalized
to the case that $\phi(x)$ is not a homogeneous function.
The key is that we can solve $\phi(a) =- \eps \overline{f}(0)$ and
that we can get bounds of $\| (\mathrm{i} t-D\phi(a) )^{-1}\|$.

This is  possible under several sets of conditions, such as $\phi$
being the sum of homogeneous functions, etc.
We will not explore these possibilities.
\end{remark}

\section{Results when the average forcing vanishes}
\label{sec:noaverage}
Both in our previous treatment and in \cite{ss18},
the assumption \eqref{averagenonzero} plays an important role.
In this section, we present some results without this assumption.
We will, however need other assumptions, such as Diophantine condition.

According to the heuristic principles we described
in Section~\ref{sec:guide}, the constant $a$ from solving $a^l + \eps\overline{f}(0)=0$ is the dominant part in \eqref{fixeq1}. 
This is based on the condition $\overline{f}(0)\neq0.$ In this part, we remove this condition. 
Therefore, we need to take the function $V$ from solving the homological equation $\partial_{\omega}V=\widetilde{f}(\theta,0)$ 
as the dominant part in \eqref{Uequation}. To deal with this equation we need some non-degeneracy assumptions, 
see \eqref{weakdiophantine}, which are much
weaker than Brjuno assumptions, for analytic case, and \eqref{dio}, which is the standard Diophantine assumptions, for finitely differentiable case.

As we will see,  our results have different
assumptions depending on whether  $l = 2$ or $ l > 2$.
The difference between
the two ranges of $l$ is real and not an artifact of the methods
since the solutions are somewhat different. 

Since the method is mainly algebraic manipulations and
contractions, it also leads
easily to results when the average is not zero but it
is  small compared
with other quantities that appear.
This goes in the opposite direction of
the results in the previous sections where  we assumed that other quantities
are small compared with the average.  We note that the solutions
we produce  in both cases are qualitatively different in the two regimes so
that it seems clear that there is  some bifurcation, but we
do not know how to formulate this precisely, much less to develop
a theory.

\subsection{Formulation of results in the zero average forcing  case}

\subsubsection{Description of the method for $l > 2$}
\label{descriptionl>2}
In this section we will describe the method we propose in
an informal way. We will ignore for the moment,
 precise definitions of spaces and formulating
precisely the hypotheses. This will be done immediately afterward, after
the steps to be taken are clarified. The informal assumption will clarify the
reasons for our choices.

We assume that in \eqref{dege}, we have $\overline{f}(0) = 0$.
We will try to find solutions of the form
\begin{equation} \label{hull}
x(t) = \eps V(\omega t) + U(\omega t).
\end{equation}

We choose
$V$ to solve the (dominant) equation
\begin{equation} \label{dominant}
\omega \partial_\theta V = \widetilde f(\theta,0).
\end{equation}
The reason that we impose the condition \eqref{weakdiophantine} in the analytic case is that we
will solve the equation \eqref{dominant}, whose small divisor is $\mathrm{i}(k\cdot\omega).$
With the estimate \eqref{weakdiophantine}, by shrinking
the complex domain $\rho$ to $(\rho-\eta)$ we can guarantee the solution to this equation is controllable.
As for finitely differentiable case, since there is no complex domain, we have to lose the regularity $m.$
In this case the condition \eqref{weakdiophantine} is not enough, we need the standard Diophantine condition
\eqref{dio}.

Notice that the Diophantine conditions
\eqref{weakdiophantine} are much weaker
than the assumptions in KAM theory.
The reason is that in our case, we only need
to solve small divisor equations twice. Hence,
we can afford that they have a more drastic
effect than in KAM theory where one needs to
solve infinitely many small divisor equations
as part of an iterative process. In our case,
we solve small divisor equations only to
set up a contraction argument.

Also, we note that the solutions of \eqref{dominant} will never be unique
since we can add a constant.  In what follows, we will assume that we
have chosen the $V$ and transform the equation for the fluctuation
accordingly. We will not revisit the choice of $V$ (except at the end
of the discussion in Section~\ref{descriptionl=2}, where we will find
that there is an advantage in choosing the constant so that
$\overline{g_1} + 2 \overline{V} \ne 0$).

In this section, we will assume that  $l > 2$. As we will see in
Section~\ref{descriptionl=2},
the case $l =2$ leads to a different answer with different non-degeneracy conditions.

We will find it convenient to introduce some notation for
the expansions of $g$ in the second variable (of course, this is
just continuing the expansion of the forcing $f$, but we will keep the notation
$g$ we used before)
\[
g(\theta,x)  = g_1(\theta)x + g_>(\theta, x),
\]
where, of course, $g_>(\theta, 0) = 0$, $D_x g_> (\theta, 0) = 0$.

Once we have chosen the function $V$ solving
\eqref{dominant}, $x(t)$ given by \eqref{hull} solves
\eqref{dege} if and only if $U$ solves
\begin{equation} \label{Uequation}
\begin{split}
\omega \partial_\theta U = (U + \eps V)^l
+ h(\theta, U + \eps V)+\eps g_1(\theta)(U+\eps V) +
\eps g_>(\theta, U + \eps V).
\end{split}
\end{equation}

We will see that the main  part of the equation
\eqref{Uequation} is the following:
\[
\mathcal{M}U  \equiv
 \omega \partial_\theta U -   \eps g_1(\theta)U.
\]
As indicated in the sketch of the strategy, we will
try to invert $\mathcal{M}$ to formulate \eqref{Uequation}
as a fixed point equation.

As we will see more precisely in Lemma~\ref{inverse},
the operator $\mathcal{M}$ can be inverted provided
that
\begin{equation} \label{secondarynondeg}
\overline{g_1} \ne 0
\end{equation}
as well as some very weak Diophantine equations
and we can obtain bounds in  the Sobolev spaces
we have used in the previous sections.
Then, the equation~\eqref{Uequation} is
equivalent to
\begin{equation}\label{fixedU}
U  = \mathcal{M}^{-1}
\left( (U + \eps V)^{l} + h(\theta, U +\eps V) +\eps^2 g_1(\theta)V+ \eps  g_>(\theta, U+ \eps V)\right),
\end{equation}
which is of a form very similar to \eqref{fixeq3}.

Once we have the estimates for $\mathcal{M}$,
the Lipschitz properties of
the non-linear terms can be estimated
rather easily when $l > 2$. As it turns out,
the term $(U + \eps V)^{l}$ has very small Lipschitz constant
when $l$ is larger. When $l =2$, we will have to rearrange the
equations a bit more. See Section~\ref{descriptionl=2}.

\begin{remark} It is a natural question
to ask what will happen if the average forcing is
zero and \eqref{secondarynondeg} fails.
It seems plausible that one can make progress identifying
other leading terms which will have to vanish and solve
the auxiliary equation. Eliminating the assumption
 \eqref{averagenonzero}
seems to bring in the qualitatively different assumption
\eqref{weakdiophantine}, but higher order non-degeneracy seems
to bring no new phenomenon.
\end{remark}

\subsubsection{Description of the method for $l = 2$}
\label{descriptionl=2}

As before, we start by a heuristic description of the method.
We will keep as much of the notation introduced in
Section~\ref{descriptionl>2}.

As we will see, the conditions we need are different since
the dominant terms that we need to consider are different.

In the case $l = 2$, we will rewrite \eqref{Uequation} (which is equivalent
for \eqref{dege} with the notations introduced)
\begin{equation} \label{Uequation2}
\begin{split}
\omega \partial_\theta U &=U^2  + 2 \eps V U + \eps^2 V^2+ h(\theta, U + \eps V)¡¢\\¡¢
&\ \ \ \ \ \ \ + \eps g_1(\theta)(U+ \eps V(\theta)) +\eps
g_>(\theta, U + \eps V(\theta))
\end{split}
\end{equation}
which is equivalent to:
\begin{equation} \label{Uequation3}
\begin{split}
\big(\omega \partial_\theta & -  (\eps g_1(\theta) +2 \eps V) \big) U \\
& =
  U^2  + \eps^2 V^2
+ h(\theta, U + \eps V)+\eps^2 g_1(\theta)V+\eps g_>(\theta, U + \eps V(\theta).
\end{split}
\end{equation}

We proceed to invert  the operator $\mathcal{N}$ defined by
\[
\mathcal{N}U  \equiv
\left(\omega \partial_\theta  -  (\eps g_1(\theta) + 2 \eps V) \right) U
\]
which can be done in the same way as we inverted $\mathcal{M}$ since they
are operators of the same form. The difference of $\mathcal{M}$ and
$\mathcal{N}$ is that $\mathcal{N}$ contains an extra multiplication
term.  Then, the equation \eqref{Uequation3} can be transformed into
\begin{equation} \label{findU2}
\begin{split}
U = \mathcal{N}^{-1} \left(U^2  + \eps^2 V^2
+ h(\theta, U + \eps V)+\eps^2 g_1(\theta)V+\eps g_>(\theta, U + \eps V(\theta)\right).
\end{split}
\end{equation}

Following the procedure in Lemma~\ref{inverse},
the operator $\mathcal{N}$ can be inverted provided
that we have that
\begin{equation} \label{secondarynondegl=2}
\overline{g_1} + 2 \overline{V}  \ne 0.
\end{equation}

The equation \eqref{secondarynondegl=2} appears for the same
reasons as \eqref{secondarynondeg}. We note however that
\eqref{secondarynondegl=2} can always be arranged if we choose, from
the beginning the $V$ solving \eqref{dominant} taking advantage of
the lack of uniqueness of solutions of \eqref{dominant}. Adding
an arbitrary constant to them is always possible, so that
\eqref{secondarynondegl=2} can always be satisfied.

Of course, the choice of $\overline{V}$ will affect some of the details
of subsequent calculations and it will affect the value of $\eps_0$
which determines the maximum size of the perturbations allowed but
will not affect the qualitative arguments.

\begin{remark}
The reason why the case $l = 2$ is special is because the
linear approximation of $U$ in $(U + \eps V)^l$  for general $l$ is
$l \eps^{l -1} V^{l -1}$. We see that in the case that $l =2$ this is
a term of order $\eps$ of the same order of magnitude as
$\eps g_1$, When $l > 2$, the linear in $U$ approximation of
$(U + \eps V)^l$ is much smaller than the $\eps g_1$.
\end{remark}

\subsection{Precise formulation of the main results in the zero average case}

\begin{theorem}\label{main-zero}
Consider the differential equation of the form \eqref{dege} with  $h$ vanishing to order $(l+1)$
 and  $\overline{f}(0)=0$.

Assume that:
\begin{itemize}
	\item
	$f,h$ are analytic, $\widetilde f(\theta,0)\in H^{\rho, m}$ with $\rho>0,\,m > d$,
	and $\| \widetilde f(\theta,0)\|_{\rho, m} $ is small enough.
	\item
	The frequency $\omega$ satisfies \eqref{weakdiophantine} with
	some $\eta>0$ smaller than $\rho$.
	\item
	In the case that $l > 2$,  the average of $g_1$ is not zero.
\end{itemize}
 Then,
for all $\eps \in \mathcal{I}$, we obtain a solution of
\eqref{dege} of the form \eqref{hull} in $H^{\rho-\eta,m}$.

We also have that 
 if $f,h$ are $C^{m +p} \,(p=1,2,\cdots)$, $\widetilde f(\theta,0) \in H^m$ with $m > d/2$ and $\omega$ satisfies \eqref{dio} with some $\tau$ satisfying $d-1<\tau<m$, then we obtain a solution in $H^{m-\tau}$.
\end{theorem}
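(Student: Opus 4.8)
The plan is to follow the roadmap already laid out in Sections~\ref{descriptionl>2} and~\ref{descriptionl=2}: substitute the ansatz \eqref{hull}, choose $V$ to kill the dominant oscillatory forcing, and recast the equation for $U$ as a fixed point for a contraction. First I would make precise the choice of $V$ solving \eqref{dominant}: writing $\widetilde f(\theta,0)=\sum_{k\ne 0}\widehat{f}_k e^{\mathrm{i}k\cdot\theta}$, set $\widehat{V}_k = \widehat{f}_k/(\mathrm{i}\,k\cdot\omega)$ for $k\ne 0$ and fix $\overline V$ later (in the $l=2$ case so that \eqref{secondarynondegl=2} holds). Under the weak Diophantine hypothesis \eqref{weakdiophantine}, a standard small-divisor estimate gives $V\in H^{\rho-\eta,m}$ with $\|V\|_{\rho-\eta,m}\le C(\eta)\|\widetilde f(\theta,0)\|_{\rho,m}$; in the finitely differentiable case, \eqref{dio} with $d-1<\tau<m$ yields $V\in H^{m-\tau}$ with the usual loss of $\tau$ derivatives. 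From here on I work in the fixed space $X:=H^{\rho-\eta,m}$ (resp. $H^{m-\tau}$), which is still a Banach algebra since $m>d$ (resp. $m-\tau>d/2$, using $\tau<m$ and, in the analytic case, $m-d$ room), so Lemmas~\ref{alge} and~\ref{gag-nir00} apply verbatim.

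Next I would establish the invertibility of the relevant linear operator — $\mathcal{M}$ when $l>2$, $\mathcal{N}$ when $l=2$ — which is the content of Lemma~\ref{inverse} (invoked but not yet stated in the excerpt; I assume it). The point is that an operator of the form $\omega\partial_\theta - \eps b(\theta)$ with $\overline b\ne 0$ can be inverted on $X$ with norm $\le C|\eps|^{-1}$: one conjugates away the oscillatory part of $b$ by the multiplier $\exp(\eps\,\partial_\omega^{-1}\widetilde b)$ (again controllable under \eqref{weakdiophantine}/\eqref{dio}, at the cost of absorbing it into the already-shrunk domain), reducing to $\omega\partial_\theta - \eps\overline b$, which is diagonal in Fourier space with eigenvalues $\mathrm{i}(k\cdot\omega)-\eps\overline b$, each of modulus $\ge|\eps\,\overline b|$. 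For $l>2$ one takes $b=g_1$ and needs $\overline{g_1}\ne 0$; for $l=2$ one takes $b=g_1+2V$ and needs \eqref{secondarynondegl=2}, which can always be arranged by the choice of $\overline V$. Thus $\|\mathcal{M}^{-1}\|,\|\mathcal{N}^{-1}\|\le C|\eps|^{-1}$.

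With the linear estimate in hand, I would run the contraction argument on \eqref{fixedU} (resp. \eqref{findU2}) in a ball $\mathcal{B}_r(0)\subset X$. The scaling is $|a|$-free here: the natural size turns out to be $r\approx|\eps|^{2}$ (the $\mathcal M^{-1}$ applied to the $\eps^2 g_1 V$ and $(\eps V)^l$ terms gives $O(|\eps|)\cdot O(|\eps|)=O(|\eps|^2)$ when $l>2$, since $(\eps V)^l$ is even smaller). For the contraction constant one estimates, using Remark~\ref{rangebounds} and the Banach algebra property, the Lipschitz constants of the nonlinear terms over $\mathcal{B}_r(0)$: $\mathrm{Lip}_U\big((U+\eps V)^l\big)\le C(r+|\eps|\,\|V\|)^{l-1}$, which for $l>2$ is $O(|\eps|^{2(l-1)}+\dots)$ — decisively small — while $h$ contributes $O((r+|\eps|\|V\|)^l)$ and $\eps g_>$ contributes $O(|\eps|(r+|\eps|\|V\|))$; multiplying by $\|\mathcal M^{-1}\|\le C|\eps|^{-1}$ gives a contraction factor $O(|\eps|)$, hence $<1/10$ for $|\eps|$ small. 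Then checking $\mathcal{T}(\mathcal{B}_r(0))\subset\mathcal{B}_r(0)$ amounts to bounding $\|\mathcal{T}(0)\|$, which is $\le C|\eps|^{-1}(\,|\eps|^2\|g_1\|\|V\| + |\eps|^l\|V\|^l + \dots)\le r/2$ for suitable $r\approx|\eps|^2$ once $\|\widetilde f(\theta,0)\|$ is small enough. The case $l=2$ is handled identically on \eqref{findU2} with $\mathcal N$ in place of $\mathcal M$; the only change is that the linear-in-$U$ part $2\eps V U$ has been moved to the left side into $\mathcal N$, so the remaining nonlinearity $U^2$ again has Lipschitz constant $O(r)=O(|\eps|^2)$ over the ball, and the same estimates go through. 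Local uniqueness and (in the regular case) differentiable dependence on $\eps$ follow from the contraction mapping theorem with parameters, exactly as in the proof of Theorem~\ref{mainthm}.

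The main obstacle I expect is bookkeeping the two separate losses of domain/regularity: one when solving \eqref{dominant} for $V$, and a second when conjugating the oscillatory part out of $\mathcal M$ (resp. $\mathcal N$). One must verify that a single $\eta$ (resp. a single $\tau$ with $d-1<\tau<m$) suffices for both, i.e. that after both operations one still lands in a Banach-algebra Sobolev space with enough room for the composition Lemma~\ref{gag-nir00} — this is why the hypothesis requires $\eta$ strictly smaller than $\rho$ and $\tau$ strictly between $d-1$ and $m$. Everything else is the now-routine contraction scheme of Section~\ref{sec:analytic0}.
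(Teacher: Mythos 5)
Your overall route is exactly the paper's: the decomposition $x=\eps V+U$ with $V$ solving \eqref{dominant}, the inversion of $\mathcal{M}$ (resp. $\mathcal{N}$) by the integrating-factor conjugation of Lemma~\ref{inverse} with the bound $C|\eps|^{-1}$, the split between $l>2$ and $l=2$ with the adjustment of $\overline V$ to secure \eqref{secondarynondegl=2}, and a contraction on \eqref{fixedU} (resp. \eqref{findU2}). The concern you raise about compounding domain losses (one $\eta$ for \eqref{dominant}, another for the conjugation) is legitimate, and for $l=2$ --- where $\varphi$ contains $\widetilde V\in H^{\rho-\eta,m}$ rather than a function analytic on all of $\mathbb{T}^d_\rho$ --- it genuinely requires splitting the loss; the paper itself glosses over this.

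There is, however, one step that fails as written: the choice of ball radius $r\approx|\eps|^2$. The self-mapping estimate requires $\|\mathcal{T}(0)\|\le (1-1/10)\,r$, and $\mathcal{T}(0)$ contains the term $\mathcal{M}^{-1}\bigl(\eps^2 g_1 V\bigr)$, whose norm is of order $|\eps|^{-1}\cdot|\eps|^2\,\|g_1\|\,\|V\| \approx |\eps|\,\gamma^{-1}\|\widetilde f(\cdot,0)\|_{\rho,m}$. For a \emph{fixed} forcing with $\widetilde f(\cdot,0)\ne 0$, this term dominates $|\eps|^2$ as $\eps\to 0$, so the ball of radius $A|\eps|^2$ is not mapped into itself uniformly for $\eps\in\mathcal{I}$; your argument would only produce solutions for $|\eps|\gtrsim\|\widetilde f(\cdot,0)\|$, not on a full interval around $0$. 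The paper takes $r\le A|\eps|$ instead: all the Lipschitz estimates you computed are unchanged (they depend on $r\lesssim|\eps|$, not on $r\lesssim|\eps|^2$ --- note in particular that $\mathrm{Lip}_U\bigl((U+\eps V)^l\bigr)$ is dominated by $(|\eps|\|V\|)^{l-1}=O(|\eps|^{l-1})$, not $O(|\eps|^{2(l-1)})$, and for $l=2$ that $\mathrm{Lip}_U(U^2)\le CA|\eps|$ still beats $\|\mathcal{N}^{-1}\|\le C|\eps|^{-1}$ for $A$ small), while the self-mapping now only needs $\|\widetilde f(\cdot,0)\|_{\rho,m}$ small compared with a constant, which is exactly the hypothesis. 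With that one correction your proof matches the paper's.
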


Since the proof is based on contraction mappings, we also obtain
local uniqueness and smooth dependence on parameters. We leave 
the straightforward formulation to the reader.

\subsection{Some auxiliary lemmas}\label{sec:dio}
In this section, we present some auxiliary lemmas motivated
by the  sketch of the arguments indicated in
Section~\ref{descriptionl>2} and \ref{descriptionl=2}.
They will allow  to carry out all the estimates required in
the sketch and make  it rigorous.

\begin{lemma}\label{cohomologyexpo}
For some $\rho,\eta > 0,$ if
 the frequency vector $\omega$
satisfies
\begin{equation} \label{weakdiophantine}
|k \cdot \omega| \ge \gamma \exp( - \eta |k| ), \quad \emph{for}\,\, k \in \mathbb{Z}^d \setminus \{0\},
\end{equation}
with $0<\gamma\ll1,$ then, for $\rho > \eta$, we have that if  $f \in H^{\rho, m}$ has zero
average, then  there is a unique solution $V$ of zero average of
the equation
\begin{equation}\label{cohomologyloss}
\omega \cdot \partial_\theta V = f.
\end{equation}

Moreover, we have  $V \in H^{\rho - \eta, m}$ and
\begin{equation}\label{expbounds}
 \| V \|_{\rho -\eta,m} \le   \gamma^{-1} \| f \|_{\rho, m}.
\end{equation}
\end{lemma}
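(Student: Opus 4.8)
The plan is to solve the equation \eqref{cohomologyloss} termwise in Fourier series and then verify convergence in the claimed space using the Diophantine bound \eqref{weakdiophantine}. Writing $f(\theta) = \sum_{k} \widehat{f}_k e^{\mathrm{i} k \cdot \theta}$ with $\widehat{f}_0 = 0$ (the zero-average hypothesis), and seeking $V(\theta) = \sum_{k} \widehat{V}_k e^{\mathrm{i} k \cdot \theta}$, the operator $\omega \cdot \partial_\theta$ acts diagonally: $(\omega \cdot \partial_\theta) e^{\mathrm{i} k \cdot \theta} = \mathrm{i} (k \cdot \omega) e^{\mathrm{i} k \cdot \theta}$. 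Hence \eqref{cohomologyloss} forces $\widehat{V}_k = \widehat{f}_k / (\mathrm{i}\, k \cdot \omega)$ for $k \ne 0$, while $\widehat{V}_0$ is free; choosing $\widehat{V}_0 = 0$ gives the unique zero-average solution, and the non-uniqueness is exactly the freedom to add a constant, as already noted in the discussion preceding the lemma. By \eqref{non-res} the denominators never vanish, so the formal solution is well defined.

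The second step is to estimate the norm. Using the definition of $\|\cdot\|_{H^{\rho-\eta,m}}$ from Definition \ref{space} and the bound \eqref{weakdiophantine}, I would compute
\begin{equation*}
\| V \|_{\rho-\eta,m}^2 = \sum_{k \ne 0} \frac{|\widehat{f}_k|^2}{|k \cdot \omega|^2} e^{2(\rho-\eta)|k|} (|k|^2+1)^m \le \gamma^{-2} \sum_{k \ne 0} |\widehat{f}_k|^2 e^{2\eta|k|} e^{2(\rho-\eta)|k|} (|k|^2+1)^m,
\end{equation*}
and the two exponentials combine to $e^{2\rho|k|}$, so the right-hand side is exactly $\gamma^{-2} \| f \|_{\rho,m}^2$. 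Taking square roots gives \eqref{expbounds}, which in particular shows the series for $V$ converges in $H^{\rho-\eta,m}$ and hence $V \in H^{\rho-\eta,m}$, completing the existence claim. For the finitely differentiable variant (needed later, solved by \eqref{dio}) one instead uses the polynomial lower bound $|k\cdot\omega| \ge \gamma |k|^{-\tau}$, which costs $\tau$ derivatives rather than a shrinkage of the analyticity strip; the bookkeeping is identical with $e^{2\eta|k|}$ replaced by $(|k|^2+1)^\tau$.

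There is really no serious obstacle here: the lemma is the standard "one cohomological equation" estimate, and the only thing to be careful about is matching the exponential weights so that the loss $\eta$ in the strip width exactly absorbs the exponential small-divisor loss $e^{\eta|k|}$ — this is the reason the hypothesis requires $\rho > \eta$. I would also remark (as the paragraph after the lemma statement effectively does) that one cannot take $\eta = 0$: under \eqref{weakdiophantine} alone the divisors may be exponentially small, so some loss of domain is unavoidable, which is precisely why the final solutions in Theorem \ref{main-zero} live in $H^{\rho-\eta,m}$ rather than $H^{\rho,m}$.
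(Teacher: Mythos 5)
Your proposal is correct and follows exactly the route the paper takes: diagonalize $\omega\cdot\partial_\theta$ in Fourier series, set $\widehat V_k=\widehat f_k/(\mathrm{i}\,k\cdot\omega)$ with $\widehat V_0=0$, and read the bound \eqref{expbounds} off the Fourier coefficients, with the factor $e^{2\eta|k|}$ from \eqref{weakdiophantine} absorbed by the shrinkage of the strip from $\rho$ to $\rho-\eta$. The only difference is that you write out the weight computation that the paper leaves implicit, which is a fine (indeed clearer) presentation of the same argument.
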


\begin{proof}
The proof is obvious if we realize that the equation \eqref{cohomologyloss} is
equivalent to  the Fourier coefficients of $V$ as the following:
\[
\mathrm{i}  (k \cdot \omega) \, \hat V_k =  \hat f_k, \quad \text{for}\,\,k \in \mathbb{Z}^d.
\]
This determines  $\hat V_k$ when $k \ne 0$ and normalizing $V$ to
zero average gives $\hat V_0 = 0$. Then,
\eqref{expbounds} establishes since
the norm of $V$ in the space $H^{\rho -\eta , m}$
is read off the size  of the Fourier coefficients of $V$.
\end{proof}

\begin{lemma}\label{inverse}
For $\rho,\,\gamma,\,\eta  > 0$ with $\rho>\eta$ and $m > d$, let $\omega$ satisfy \eqref{weakdiophantine} and $\beta \ne 0$ be a real constant.

 If  $\varphi \in H^{\rho, m}$ have zero average, then, for any $f \in H^{\rho-\eta, m}$, there is a unique solution $V \in H^{\rho-\eta,m}$
solving
\begin{equation}\label{twistedcohomology}
(\omega \partial_\theta  + \beta+ \varphi)V  = f.
\end{equation}

Furthermore, we have
\begin{equation}\label{twistedcohomologyestimates}
\| V\|_{\rho-\eta , m }
\le |\beta|^{-1}
\|f\|_{\rho-\eta, m} \exp( 2 \gamma^{-1} \| \varphi\|_{\rho,m}).
\end{equation}
\end{lemma}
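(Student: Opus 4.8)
The plan is to reduce the twisted cohomological equation \eqref{twistedcohomology} to a fixed point problem that can be solved by a Neumann-type series, using Lemma~\ref{cohomologyexpo} to invert the leading part $\omega\partial_\theta + \beta$. First I would observe that since $\varphi$ has zero average, the operator $\omega\partial_\theta + \beta$ is boundedly invertible: in Fourier coefficients it acts as multiplication by $\mathrm{i}(k\cdot\omega) + \beta$, and since $\beta \ne 0$ is real while $\mathrm{i}(k\cdot\omega)$ is purely imaginary, we have $|\mathrm{i}(k\cdot\omega)+\beta| \ge |\beta|$ for all $k$. Hence $(\omega\partial_\theta + \beta)^{-1}$ maps $H^{\rho-\eta,m}$ to itself (indeed with no loss of domain, since this operator has no small divisors) with norm at most $|\beta|^{-1}$. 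Then \eqref{twistedcohomology} is equivalent to
\begin{equation*}
V = (\omega\partial_\theta + \beta)^{-1} f - (\omega\partial_\theta + \beta)^{-1}(\varphi V) \equiv \mathcal{F}(V).
\end{equation*}

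The next step is to show $\mathcal{F}$ is a contraction on $H^{\rho-\eta,m}$, or more precisely to solve $V = \mathcal{F}(V)$ by the geometric series $V = \sum_{j\ge 0} (-1)^j \big[(\omega\partial_\theta+\beta)^{-1}\varphi\big]^j (\omega\partial_\theta+\beta)^{-1} f$. Here I would use the Banach algebra property (Lemma~\ref{alge}, analytic case, valid since $m > d$) to estimate $\|\varphi V\|_{\rho-\eta,m} \le C_{\rho-\eta,m,d}\,\|\varphi\|_{\rho-\eta,m}\|V\|_{\rho-\eta,m}$, together with $\|\varphi\|_{\rho-\eta,m} \le \|\varphi\|_{\rho,m}$. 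This gives $\|(\omega\partial_\theta+\beta)^{-1}\varphi\|_{H^{\rho-\eta,m}\to H^{\rho-\eta,m}} \le |\beta|^{-1} C \|\varphi\|_{\rho,m}$, and when this is $< 1$ the series converges and yields the unique solution, with $\|V\|_{\rho-\eta,m} \le |\beta|^{-1}\|f\|_{\rho-\eta,m}\big(1 - |\beta|^{-1}C\|\varphi\|_{\rho,m}\big)^{-1}$. For the general case (and to reach the clean bound \eqref{twistedcohomologyestimates}) I would instead work coefficient-by-coefficient in the way Lemma~\ref{cohomologyexpo} is used: solving \eqref{twistedcohomology} for $\hat V_k$ in terms of the convolution with $\hat\varphi$, or alternatively solving the equation directly on the torus $\mathbb{T}^d_{\rho-\eta}$ and using the explicit integrating-factor-type representation of the solution of $(\omega\partial_\theta + \beta + \varphi)V = f$; then the factor $\exp(2\gamma^{-1}\|\varphi\|_{\rho,m})$ arises from summing the geometric series after bounding the $\varphi$-dependent terms via the weak Diophantine condition \eqref{weakdiophantine} (the $\gamma^{-1}$ comes from \eqref{expbounds}, the factor $2$ and the exponential from absorbing the Banach-algebra constants into a crude geometric bound $\sum_j (\tfrac12 \cdot 2\gamma^{-1}\|\varphi\|)^j / j! \le \exp(2\gamma^{-1}\|\varphi\|)$, which is deliberately wasteful but uniform).

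The main obstacle I anticipate is tracking the loss of complex domain correctly and getting the stated form of the estimate. Specifically, the subtlety is that inverting $\omega\partial_\theta + \beta$ costs nothing in domain (no small divisors), so the domain loss $\rho \to \rho-\eta$ in \eqref{twistedcohomologyestimates} is entirely an artifact of where $\varphi$ lives versus where we want $V$: we need $\varphi \in H^{\rho,m}$ and the factor $\gamma^{-1}$ in the exponent is what we would pay if we instead had to invert $\omega\partial_\theta$ on the $\varphi$-part — so the cleanest route is to present the solution of \eqref{twistedcohomology} not via the trivial series above but by first dividing by the integrating factor to reduce to a pure small-divisor equation $\omega\partial_\theta W = (\text{something involving } f, \varphi)$, apply Lemma~\ref{cohomologyexpo}, and then undo the change of variables; this is the route that naturally produces $\gamma^{-1}$ and the exponential and justifies the $\rho-\eta$ loss. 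I would also note that the hypothesis ``$\beta$ real'' is used only to guarantee $|\mathrm{i}(k\cdot\omega)+\beta|\ge|\beta|$; the same proof works with $|\mathrm{Re}\,\beta|$ in place of $|\beta|$ if $\beta$ is complex, which will matter for the monodromy discussion. Finally, uniqueness follows from the contraction/invertibility, or directly: if $(\omega\partial_\theta+\beta+\varphi)V = 0$ then the energy estimate obtained by pairing with $V$ in $H^{\rho-\eta,m}$ (using that $\omega\partial_\theta$ is skew-adjoint and $\varphi$ contributes a bounded perturbation controlled by $\|\varphi\|_{\rho,m}$, which we may take small) forces $V = 0$.
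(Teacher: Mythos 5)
Your ``cleanest route'' at the end is the paper's proof, essentially verbatim: find $\Gamma$ with $\omega\partial_\theta\Gamma=\varphi$ via Lemma~\ref{cohomologyexpo} (this is the only place the weak Diophantine condition and the domain loss $\rho\to\rho-\eta$ enter, giving $\|\Gamma\|_{\rho-\eta,m}\le\gamma^{-1}\|\varphi\|_{\rho,m}$), multiply \eqref{twistedcohomology} by $\exp(\Gamma)$ to get $(\omega\cdot\partial_\theta+\beta)\bigl(\exp(\Gamma)V\bigr)=f\exp(\Gamma)$, and invert explicitly: $V=\exp(-\Gamma)\,(\omega\cdot\partial_\theta+\beta)^{-1}\bigl(f\exp(\Gamma)\bigr)$. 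The estimate \eqref{twistedcohomologyestimates} then comes from $\|(\omega\cdot\partial_\theta+\beta)^{-1}\|\le|\beta|^{-1}$ together with the Banach-algebra bound $\|\exp(\pm\Gamma)\|_{\rho-\eta,m}\le\exp(\gamma^{-1}\|\varphi\|_{\rho,m})$; the factor $2$ in the exponent is simply one copy of this bound for $\exp(\Gamma)$ and one for $\exp(-\Gamma)$, not the result of resumming a geometric series as you suggest. Uniqueness is immediate because this formula exhibits the operator as a composition of three bijections.

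Two points in your write-up need correction. First, the Neumann-series argument that occupies the first half of the proposal proves the lemma only under the extra hypothesis $|\beta|^{-1}C\|\varphi\|_{\rho,m}<1$, which is not assumed; the remark following the lemma in the paper states explicitly that avoiding this smallness hypothesis is the whole point (in the application $\beta=-\eps\overline{g_1}$ and $\varphi=-\eps\widetilde{g_1}$ are of the same order in $\eps$, so $|\beta|^{-1}\|\varphi\|_{\rho,m}\approx\|\widetilde{g_1}\|/|\overline{g_1}|$ cannot be made small). Your energy-estimate argument for uniqueness has the same defect: it forces $V=0$ only when $C\|\varphi\|<|\beta|$, and ``which we may take small'' is not available. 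Second, the integrating factor does not reduce \eqref{twistedcohomology} to a ``pure small-divisor equation $\omega\partial_\theta W=\cdots$''; the $\beta$ term survives the conjugation, and it must, since without it the reduced equation would require a solvability condition on the average of $f\exp(\Gamma)$ (this is exactly the obstruction the paper flags for $\beta=0$). The only small-divisor equation in the proof is the one for $\Gamma$. With these repairs, your fallback plan is the intended proof.
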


\begin{remark}
Note that the Lemma~\ref{inverse} would be immediate under the extra assumption
that $|\gamma|^{-1} \| \varphi \|_{\rho+\eta, m} $ sufficiently small. In such
a case we could invert   the operator
$(\omega \partial_\theta  + \beta)$  using Fourier series and
then use the Neumann series to invert
$(\omega \partial_\theta  + \beta + \varphi)$. For our applications,
it is desirable not to make the extra  assumption.
\end{remark}

\begin{remark}
Equations of the form \eqref{twistedcohomology} are called
\emph{``twisted cohomology equations"} in \cite{Herman83},
which also develops techniques to solve them.

There are several interesting variants
of  \eqref{twistedcohomology} estimates.
\end{remark}

\begin{proof}
The proof is very similar to the integrating factor method in
linear ODE's.

We find $\Gamma$ solving $\omega \partial_\theta \Gamma = \varphi$
(as in the case of the integrating factor, we remark that such
$\Gamma$ is unique up to an additive constant).

By Lemma~\ref{cohomologyexpo}, we have
\[
\| \Gamma \|_{\rho-\eta, m}  \le  \gamma^{-1}  \|\varphi \|_{\rho ,m}
\]
and, by  the Banach algebra properties of the Sobolev norm,
\[
\|\exp( \Gamma ) \|_{\rho-\eta, m}  \le
\exp( \gamma^{-1}  \|\varphi \|_{\rho,m}).
\]

Then, multiplying \eqref{twistedcohomology}  by $\exp(\Gamma)$,
we obtain that it is equivalent to
\[
\begin{split}
 f \exp(\Gamma) &=
\exp(\Gamma) \omega \partial_\theta V  + \beta V \exp(\Gamma)
+ \exp(\Gamma) (\omega\partial_\theta \Gamma) V  \\
&= \left( \omega \cdot \partial_\theta + \beta \right)( \exp(\Gamma) V).
\end{split}
\]

Hence, using that the operator
$ \omega \cdot \partial_\theta + \beta$ is invertible
and so are the operators of multiplication by
$\exp( \pm \Gamma)$, one has
\begin{equation} \label{twistedcohomologysolution}
V = \exp( - \Gamma) \left( \omega \cdot \partial_\theta + \beta\right)^{-1}
f\exp(\Gamma).
\end{equation}

{From} \eqref{twistedcohomologysolution}, the estimates claimed in
\eqref{twistedcohomologyestimates}  follow immediately.
\end{proof}
\begin{remark}
In case that $\beta = 0$, to use formula \eqref{twistedcohomologysolution}
we need to assume that $\exp(\Gamma) f$ has average zero.
This shows that in this case we will require different arguments.
\end{remark}

\begin{remark}
	Note that Lemma~\ref{cohomologyexpo} and Lemma~\ref{inverse} are aimed at the analytic functions. When we consider our problem in finitely differentiable setting, we need to assume that the frequency $\omega$ satisfies
	\begin{equation} \label{dio}
	|k \cdot \omega| \ge \gamma |k|^{-\tau}, \quad \emph{for}\,\, k \in \mathbb{Z}^d \setminus \{0\}
	\end{equation}
with $d-1<\tau<m$ and $0<\gamma\ll1$ (the condition $\tau>d-1$ guarantees that the set whose elements are the frequencies satisfying 
\eqref{dio} is of positive Lebesgue measure).
	Then, for $f \in H^{m},\,m>\frac{d}{2}$ has zero
	average,  there is a unique solution $V\in H^{m-\tau}$ of zero average of
	the equation
	\begin{equation*}
	\omega \cdot \partial_\theta V = f
	\end{equation*}
	satisfying
	\begin{equation}\label{expbounds1}
	\| V \|_{m-\tau} \le   \gamma^{-1} \| f \|_{m}.
	\end{equation}
	Moreover,
	for $\psi \in H^{m-\tau}$ have zero average, then, for any $f \in H^{m-\tau}$, there is a unique solution $V \in H^{m-\tau}$
	solving
	\begin{equation*}
	(\omega \partial_\theta  + \beta+ \varphi)V  = f
	\end{equation*}
	with
	\begin{equation*}
	\| V\|_{m-\tau }
	\le |\beta|^{-1}
	\|f\|_{m-\tau} \exp( 2 \gamma^{-1} \| \varphi\|_{m}).
	\end{equation*}
\end{remark}

\subsection{Proof of the results in the zero average forcing case}
We only present the detailed proof of Theorem~\ref{main-zero} in analytic case. The finitely differentiable case is similar.
\subsubsection{The case $l > 2$}
In the case $l > 2$, we will consider the equation \eqref{fixedU}
and check the hypotheses of the contraction mapping principle for
the operator on the right.

The operator $\mathcal{M}$ fits into Lemma~\ref{inverse}
by taking $\beta = -\eps\overline{g_1}$,
$\varphi = -\eps\widetilde{g_1}$. Therefore we obtain
$\| \mathcal{M} \| \le C_1 |\eps|^{-1}$, where $C_1$ depends on $
g_1,\gamma$. To simplify the notation, we still use  $C_1$ to represent all constants (may depend on $l,\gamma,f,h$ but not depend on $\eps$).

Recall that \eqref{fixedU} is an equation for $U$ and that
$V$ has already been picked.

If we consider a ball of radius $r$ with $r \le A |\eps|$,
(we henceforth fix $A$, so that all the constants may depend on it),
we can estimate the Lipschitz constants of the nonlinear terms in
the right hand side of \eqref{fixedU} with respect to the
$U$ variable as the following:
\[
\begin{split}
& \text{Lip}_U\left( (U + \eps V)^l   \right) \le C_1 |\eps|^{l-1}, \\
& \text{Lip}_U\left( \eps h(\theta, U +\eps V)   \right) \le C_1 |\eps|^l,\\
&\text{Lip}_U\left( \eps g_>( U +\eps V)   \right) \le C_1 |\eps|^2.
\end{split}
\]
 Note that the distance is measured in $H^{\rho -\eta, m}$.

Hence, we obtain that, the right hand side of \eqref{fixedU} has
a Lipschitz constant bounded by $C_1 |\eps|$. We choose $|\eps|$
small enough so that we get a contraction by $1/10$.

We also observe that for $U = 0$, the norm of the right hand side of
\eqref{fixedU} is bounded from above by
 $C_1|\eps|^{-1}( |\eps|^l + |\eps|^{l+1}+|\eps|^2|\widetilde f(\theta,0)|+|\eps|^3)$. Since we assume that $|\widetilde f(\theta,0)|$ is small enough, we get the ball to map into itself.

\subsubsection{The case l =  2}
The case $l = 2$ is based on the analysis of the operator in
the right hand side of \eqref{findU2}.

This is actually easier than the case of $l > 2$. By Lemma~\ref{inverse}, we have that
$\|\mathcal{N} \| \le C_1 |\eps|^{-1}$.

The Lipschitz constant of most  nonlinear terms in a ball or
radius $r = A |\eps|$ are estimated the same.
The only difference is
that we have
\begin{equation*}
\begin{split}
 \text{Lip}_U (U^2) \le C_1A |\eps|.
 \end{split}
\end{equation*}

Hence, we have that the Lipschitz constant of the right hand side of \eqref{fixedU} in the ball of
radius $A|\eps|$ can be made smaller than $1/10$ by taking $A$ small enough.

We also have that the $\|\cdot\|_{\rho - \eta, m}$
 norm of the right hand side of \eqref{fixedU} at $U = 0$  can be estimated
by $C_1 (|\eps|^{-1}(|\varepsilon|^2 |\widetilde{f}(\theta,0)|+|\eps|^3)$. Thus, by taking $|\eps|$ small enough, we
can get that  the operator maps the ball into itself.

\section{Application to degenerate oscillators (second order equations)}
\label{sec:oscillators}
Remarkably similar methods can be applied to
the study of degenerate oscillators (second order equations).
\begin{equation}
\label{degeosc}
\ddot{x} + \delta \dot{x}  = x^l + h(\omega t, x) + \eps f(\omega t, x)
\end{equation}
where $h,f$ are as in \eqref{dege}.
Again, we aim to find solutions of the form
\eqref{realso}.

Note that the equation  \eqref{degeosc} has two small parameters
$\delta$, $\eps$. Depending on the relation among them, we
will have  that the dominant solution has different forms.

In this paper, we only aim to demonstrate the possibilities of
the method  and will only do one of the cases. We hope to
come back to a more complete study.

A sample result is the following:
\begin{theorem}
	Consider the equation \eqref{degeosc}
	with $h,f$ as in \eqref{dege}.
	
	Assume that there exist  $a$ solving
	\[
	a^l + \eps \overline{f}(0) = 0
	\]
	and choose one of them.

	Assume:
	\begin{itemize}
		\item
		\eqref{averagenonzero}.
		\item
		$\|\widetilde f(\theta,0)\|_{\rho,m}$ is small enough compared
		to $|\overline{f}(0)|.$
		\item
		\[
		\delta^2 + 2 l a^{l-1}  \geq 0.
		\]
	\end{itemize}
	Then, the same conclusions
	as in Theorem~\ref{mainthm},~\ref{theom-fi}.
\end{theorem}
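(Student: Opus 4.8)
The strategy is to reduce the second-order equation \eqref{degeosc} to a fixed point problem of exactly the same shape as \eqref{fixeq3}, so that the contraction argument from the proof of Theorem~\ref{mainthm} (and Theorem~\ref{theom-fi} in the finitely differentiable case) applies verbatim. Substituting $x(t) = a + V(\omega t)$ into \eqref{degeosc} and using density of $\{\omega t\}$ in $\mathbb{T}^d$, the equation becomes
\begin{equation*}
(\omega\cdot\partial_\theta)^2 V + \delta\,(\omega\cdot\partial_\theta) V
= (a+V)^l + h(\theta, a+V) + \eps f(\theta, a+V).
\end{equation*}
Using the choice $a^l + \eps\overline{f}(0) = 0$ and expanding $(a+V)^l = a^l + la^{l-1}V + S(a,V)$ as in \eqref{remainder}, together with $f = \overline f(0) + \widetilde f(\theta,0) + g(\theta, a+V)$, the constant terms cancel and we are left with
\begin{equation*}
\left( (\omega\cdot\partial_\theta)^2 + \delta\,(\omega\cdot\partial_\theta) - la^{l-1}\right) V
= S(a,V) + h(\theta, a+V) + \eps\widetilde f(\theta,0) + \eps g(\theta, a+V),
\end{equation*}
which is the analogue of \eqref{fixeq2} with the first-order operator $\mathcal{L}_a$ replaced by the second-order operator $\widetilde{\mathcal{L}}_a := (\omega\cdot\partial_\theta)^2 + \delta\,(\omega\cdot\partial_\theta) - la^{l-1}$.

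The key step is to establish the analogue of the bound \eqref{inversbound} for $\widetilde{\mathcal{L}}_a^{-1}$ acting on $H^{\rho,m}$. Since $\widetilde{\mathcal{L}}_a$ is diagonal in Fourier series, $\widetilde{\mathcal{L}}_a(e^{\mathrm{i} k\cdot\theta}) = \bigl(-(k\cdot\omega)^2 + \mathrm{i}\delta(k\cdot\omega) - la^{l-1}\bigr)e^{\mathrm{i} k\cdot\theta}$, so I need a uniform lower bound on $\bigl|-(k\cdot\omega)^2 + \mathrm{i}\delta(k\cdot\omega) - la^{l-1}\bigr|$ over $k\in\mathbb{Z}^d$. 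Writing $t = k\cdot\omega\in\mathbb{R}$, the modulus squared is $(t^2 + la^{l-1})^2 + \delta^2 t^2$; minimizing over $t\in\mathbb{R}$ and using the hypothesis $\delta^2 + 2la^{l-1} \ge 0$ (which is exactly the condition that keeps $t^2 + la^{l-1}$ from being cancelled by the $\delta^2 t^2$ term — in the relevant regime the minimum is attained at $t=0$ or is controlled from below by $la^{l-1}$ up to the sign), one gets a lower bound comparable to $|la^{l-1}| = l|\overline f(0)|^{1-1/l}|\eps|^{1-1/l}$, hence
\begin{equation*}
\|\widetilde{\mathcal{L}}_a^{-1}\|_{H^{\rho,m}\to H^{\rho,m}} \le C\,|\eps|^{-1 + 1/l}.
\end{equation*}
This is the same scaling as \eqref{inversbound}, which is precisely what makes the rest of the argument go through unchanged. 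The main obstacle is checking this minimization carefully in the even-$l$ case, where $a^{l-1}$ can be negative and one must verify that the hypothesis $\delta^2 + 2la^{l-1}\ge 0$ is exactly what prevents $|{-(t^2+la^{l-1})^2 + \delta^2 t^2}|$... wait, rather $(t^2 + la^{l-1})^2 + \delta^2 t^2$ from vanishing, i.e. that $\widetilde{\mathcal{L}}_a$ has no spectrum on the imaginary-frequency lattice.

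Once the bound on $\widetilde{\mathcal{L}}_a^{-1}$ is in hand, I define $\widetilde{\mathcal{T}}_a(V) := \widetilde{\mathcal{L}}_a^{-1}\bigl(S(a,V) + h(\theta, a+V) + \eps\widetilde f(\theta,0) + \eps g(\theta, a+V)\bigr)$ and repeat the estimates \eqref{lipschitztau} and \eqref{ballmapped} verbatim: the Lipschitz constants $\mathrm{Lip}_V(S)\le C|a|^{l-2}r$, $\mathrm{Lip}_V(h)\le C(|a|+r)^l$, $\mathrm{Lip}_V(g)\le C$ depend only on the right-hand side (via Lemma~\ref{gag-nir00} and Remark~\ref{rangebounds}), not on which linear operator we invert. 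Taking $|a|\approx|\eps|^{1/l}$ and the ball radius $r = A|\eps|^{1/l}$ with $A$ small, the product $\|\widetilde{\mathcal{L}}_a^{-1}\|\cdot(|a|^{l-2}r + (|a|+r)^l + |\eps|)$ is $O(|\eps|^{1/l})\to 0$, giving the contraction; and $\|\widetilde{\mathcal{T}}_a(0)\|_{\rho,m}\le C|\eps|^{-1+1/l}(|\eps|\,\|\widetilde f(\theta,0)\|_{\rho,m} + |\eps|^{1+2/l} + |\eps|^{1+1/l})$, which is $\le r/2$ once $\|\widetilde f(\theta,0)\|_{\rho,m}/|\overline f(0)|$ is small enough. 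Hence $\widetilde{\mathcal{T}}_a$ maps $\mathcal{B}_r(0)$ into itself and is a contraction there, so it has a unique fixed point $V\in H^{\rho,m}$ (resp. $H^m$), producing the response solution $x(t) = a + V(\omega t)$; local uniqueness and the analytic/differentiable dependence on $\eps$ follow from the contraction mapping theorem with parameters exactly as in Section~\ref{sec:analytic0}. The interval $\mathcal{I}$ for $\eps$ is dictated, as before, by solvability of $a^l + \eps\overline f(0) = 0$ together with the sign constraint $\delta^2 + 2la^{l-1}\ge 0$, which in the even-$l$ case further restricts how small $|\eps|$ (equivalently $|a|$) may be relative to $\delta$.
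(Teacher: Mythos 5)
Your proposal is correct and follows essentially the same route as the paper: reduce to the fixed point equation with the second-order operator $\widetilde{\mathcal{L}}$, bound its inverse via the Fourier symbol by writing $|\widetilde{L}_k|^2 = t^4 + t^2(\delta^2 + 2la^{l-1}) + l^2 a^{2(l-1)} \ge |la^{l-1}|^2$ using the hypothesis $\delta^2 + 2la^{l-1}\ge 0$, and then reuse the contraction estimates \eqref{lipschitztau} and \eqref{ballmapped} unchanged. The only cosmetic issue is the hesitation in your minimization step; the clean statement is that, as a quadratic in $s=t^2\ge 0$ with nonnegative linear coefficient, the symbol modulus squared is minimized at $s=0$.
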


The proof is extremely similar to the study of
\eqref{dege}.
After we substitute \eqref{realso} in \eqref{degeosc} and
cancel $a^l + \eps\overline{f}(0)$
we see that \eqref{realso} is a solution of \eqref{degeosc} if
and only if $V$ satisfies:
\begin{equation}\label{tosolveosc}
\widetilde{\mathcal{L}}V =
S(a,V) +
\eps \widetilde{f}(0,0)
+h(\theta, a + V(\theta))+ \eps g(\theta, a + V(\theta) ),
\end{equation}
where
\[
\widetilde{\mathcal{L}} V  \equiv
\left[
(\omega\cdot \partial_\theta)^2
+ \delta
(\omega\cdot \partial_\theta) - l a^{l-1} \right] V.
\]
If the operator $\widetilde{\mathcal{L}}$ was invertible,
\eqref{tosolveosc} would be equivalent to
\begin{equation}\label{fixed1osc}
V = \widetilde{\mathcal{L}}^{-1} \left(
S(a,V) +
\eps \widetilde{f}(0,0)
 + h(\theta, a + V(\theta)) + \eps g(\theta, a + V(\theta) )\right).
\end{equation}
Note the similitude between \eqref{fixed1osc} and
\eqref{fixeq3}. The only difference is
the linear operator to be inverted.

Hence, we will need to study the invertibility of the operator
$\widetilde{\mathcal{L}}$ and the norm of its inverse.
We note that the operator $\widetilde{\mathcal{L}}$ is diagonal
in Fourier series and it amounts to multiplying the
$k$ Fourier coefficient by
\[
\widetilde{L}_k \equiv -(k \cdot \omega)^2 +  \mathrm{i}\delta (k \cdot \omega) - l a^{l-1}.
\]
Hence, to estimate $\| \widetilde{\mathcal{L}}^{-1}\|$,
it suffices to estimate from below  the minimum of $|\widetilde{L}_k|$.
Denoting $t = k \cdot \omega$, we have
\[
\begin{split}
|\widetilde{L}_k|^2 &=  (-t^2 - l a^{l-1} )^2 + \delta^2 t^2 \cr
&= t^4 + t^2(\delta^2 + 2 l a^{l-1}) + l^2 a^{2(l-1)}  \cr
&\ge | l a^{l-1}|^2,
\end{split}
\]
where the last inequality comes from the assumption
that $(\delta^2 + 2 l a^{l-1}) \ge 0$.

Once we have that, we see that the operator in
\eqref{fixed1osc} satisfies exactly the same bounds
as the operator in \eqref{fixeq3} and the rest of the proof
does not need any modification from the estimates
in the proof of Theorem~\ref{mainthm} (see \eqref{lipschitztau},
\eqref{ballmapped}).

\bibliographystyle{alpha}
\bibliography{dege-fixed-point}

\def\cprime{$'$}
\begin{thebibliography}{CCCdlL17}

\bibitem[AZ90]{AZ90}
J\"{u}rgen Appell and Petr~P. Zabrejko.
\newblock {\em Nonlinear superposition operators}, volume~95 of {\em Cambridge
  Tracts in Mathematics}.
\newblock Cambridge University Press, Cambridge, 1990.

\bibitem[BM61]{BogoliubovM61}
N.~N. Bogoliubov and Y.~A. Mitropolsky.
\newblock {\em Asymptotic methods in the theory of non-linear oscillations}.
\newblock Translated from the second revised Russian edition. International
  Monographs on Advanced Mathematics and Physics. Hindustan Publishing Corp.,
  Delhi, Gordon and Breach Science Publishers, New York, 1961.

\bibitem[CCCdlL17]{Rafael17}
Renato~C. Calleja, Alessandra Celletti, Livia Corsi, and Rafael de~la Llave.
\newblock Response solutions for quasi-periodically forced, dissipative wave
  equations.
\newblock {\em SIAM J. Math. Anal.}, 49(4):3161--3207, 2017.

\bibitem[CCdlL13]{Rafael13}
Renato~C. Calleja, Alessandra Celletti, and Rafael de~la Llave.
\newblock Construction of response functions in forced strongly dissipative
  systems.
\newblock {\em Discrete Contin. Dyn. Syst.}, 33(10):4411--4433, 2013.

\bibitem[dlLT94]{LlaveT94}
Rafael de~la Llave and Stathis Tompaidis.
\newblock Nature of singularities for analyticity domains of invariant curves.
\newblock {\em Phys. Rev. Lett.}, 73(11):1459--1463, 1994.

\bibitem[Hal80]{Hale80}
Jack~K. Hale.
\newblock {\em Ordinary differential equations}.
\newblock Robert E. Krieger Publishing Co., Inc., Huntington, N.Y., second
  edition, 1980.

\bibitem[Her83]{Herman83}
Michael-R. Herman.
\newblock {\em Sur les courbes invariantes par les diff\'{e}omorphismes de
  l'anneau. {V}ol. 1}, volume 103 of {\em Ast\'{e}risque}.
\newblock Soci\'{e}t\'{e} Math\'{e}matique de France, Paris, 1983.
\newblock With an appendix by Albert Fathi, With an English summary.

\bibitem[IKT13]{kappe03}
H.~Inci, T.~Kappeler, and P.~Topalov.
\newblock On the regularity of the composition of diffeomorphisms.
\newblock {\em Mem. Amer. Math. Soc.}, 226(1062):vi+60, 2013.

\bibitem[JdlLZ99]{JorbaLZ99}
\`Angel Jorba, Rafael de~la Llave, and Maorong Zou.
\newblock Lindstedt series for lower-dimensional tori.
\newblock In {\em Hamiltonian systems with three or more degrees of freedom
  ({S}'{A}gar\'{o}, 1995)}, volume 533 of {\em NATO Adv. Sci. Inst. Ser. C
  Math. Phys. Sci.}, pages 151--167. Kluwer Acad. Publ., Dordrecht, 1999.

\bibitem[Min62]{Minorsky62}
Nicolas Minorsky.
\newblock {\em Nonlinear oscillations}.
\newblock D. Van Nostrand Co., Inc., Princeton, N.J.-Toronto-London-New York,
  1962.

\bibitem[SS18]{ss18}
Wen Si and Jianguo Si.
\newblock Response solutions and quasi-periodic degenerate bifurcations for
  quasi-periodically forced systems.
\newblock {\em Nonlinearity}, 31(6):2361--2418, 2018.

\bibitem[Tay97]{taylor3}
Michael~E. Taylor.
\newblock {\em Partial differential equations. {III}}, volume 117 of {\em
  Applied Mathematical Sciences}.
\newblock Springer-Verlag, New York, 1997.
\newblock Nonlinear equations, Corrected reprint of the 1996 original.

\bibitem[WdlL19]{fenfen19}
Fenfen Wang and Rafael de~la Llave.
\newblock Response solutions to quasi-periodically forced systems, even to
  possibly ill-posed pdes, with strongly dissipation and any frequency vectors,
  2019.
\newblock MP\_ARC \# 19-44.

\end{thebibliography}

\end{document}